\newtheorem{theorem}{Theorem}[section]
\newtheorem{lemma}[theorem]{Lemma}
\newtheorem*{theorem*}{Theorem}
\theoremstyle{definition}
\newtheorem{definition}{Definition}[section]
\theoremstyle{remark}
\begin{document}

\title[\ ]{}

\begin{center}
\uppercase{\textbf{Quadratic isoperimetric inequality for $7$-located simplicial complexes}}\\
\vspace{0.5cm}
\end{center}

\author[\ ]{
Ioana-Claudia Laz\u{a}r\\
Politehnica University of Timi\c{s}oara, Dept. of Mathematics,\\
Victoriei Square $2$, $300006$-Timi\c{s}oara, Romania\\
E-mail address: ioana.lazar@upt.ro}

\date{}

\begin{abstract}

We show that $7$-located simplicial complexes satisfy a quadratic isoperimetric inequality.

\hspace{0 mm} \textbf{2010 Mathematics Subject Classification}:
05C99, 05C75.

\hspace{0 mm} \textbf{Keywords}: $7$-location, quadratic isoperimetric inequality, minimal disc diagram.
\end{abstract}

\pagestyle{myheadings}

\markboth{}{}

  \vspace{-10pt}

\maketitle

\section{Introduction}

Curvature can be expressed both in metric and combinatorial terms. Metrically, one can refer to ’nonpositively
curved’ (respectively, ’negatively curved’) metric spaces in the sense of Aleksandrov, i.e. by comparing small triangles in the space with
triangles in the Euclidean plane (hyperbolic plane). These are the CAT(0) (respectively, CAT(-1)) spaces.

Combinatorially, one looks for local combinatorial conditions implying
some global features typical for nonpositively curved metric spaces.
A very important combinatorial condition of this type was formulated by Gromov \cite{Gro} for cubical complexes, i.e.\ cellular complexes
with cells being cubes. Namely, simply connected cubical complexes with links (that can be thought as small spheres around vertices)
being flag (respectively, $5$-large, i.e.\ flag-no-square) simplicial complexes carry a canonical CAT(0) (respectively, CAT(-1)) metric.
Another important local combinatorial condition is local $k$--largeness, introduced independently by Chepoi \cite{Ch} (under the name of bridged complexes), Januszkiewicz-{\' S}wi{\c a}tkowski \cite{JS1} and Haglund \cite{Hag}. A flag simplicial complex is \emph{locally $k$-large} if its links do not contain `essential' loops of length less than $k$.

In \cites{O-sdn, ChOs,BCCGO,ChaCHO} some other curvature conditions
are studied -- they form a way of unifying CAT(0) cubical and systolic theories.
On the other hand, Osajda \cite{O-8loc} introduced a local combinatorial condition called \emph{$m$-location}, and used it, for $m = 8$, to provide a new solution to Thurston's
problem about hyperbolicity of some $3$-manifolds. In \cite{L-8loc} and \cite{L-8loc2} a systematic study of a version of $m$-location, suggested in \cite{O-8loc}, is undertaken. This version is in a sense more natural than the original one (tailored to Thurston's problem), and neither of them is implied by the other.
 Roughly, the new $m$-location says that essential loops of length at most $m$ admit filling diagrams with at most one internal vertex. In \cite{L-8loc} (Theorem $4.3$) it is shown that $8$-location is a negative-curvature-type condition. Namely, it is proven that simply connected,
$8$-located simplicial complexes are Gromov hyperbolic. In \cite{L-8loc2} we introduce another combinatorial curvature condition, called the $5/9$-condition, and we show that the complexes which fulfill it, are also Gromov hyperbolic.

Isoperimetric inequalities relate the length of closed curves to the infimal area of
the discs which they bound. It is well-known that every closed loop of length $L$ in
the Euclidean plane bounds a disc whose area is less than $\frac{L^{2}}{4 \pi}$, and this bound is
optimal. Thus one has a quadratic isoperimetric inequality for loops in Euclidean
space. In contrast, loops in real hyperbolic space satisfy a linear isoperimetric inequality:
there is a constant $C$ such that every closed loop of length $L$ in hyperbolic space bounds a
disc whose area is less than or equal to $C \cdot L$. It is known
that (with a suitable notion of area) a geodesic space $X$ is $\delta$-hyperbolic if and only
if loops in $X$ satisfy a linear isoperimetric inequality (see \cite{BH}, chapter $III.H$, page $417$ and page $419$).
Both $8$-located complexes and $5/9$-complexes satisfy therefore, under the additional hypothesis of simply connectedness, a linear isoperimetric inequality (see \cite{L-8loc}, \cite{L-8loc2}). For loops in arbitrary CAT(0) spaces, however, there is
a quadratic isoperimetric inequality (see \cite{BH}, chapter $III.H$, page $414$).

It is known that cycles in systolic complexes satisfy a quadratic isoperimetric inequality (see \cite{JS1}).
In \cite{E1} explicit constants are provided presenting the optimal estimate
on the area of a systolic disc.
In systolic complexes the isoperimetric function for $2$-spherical cycles
(the so called second isoperimetric function) is linear (see \cite{JS2}).
In \cite{ChaCHO} it is shown that meshed graphs (thus, in particular, weakly modular graphs) satisfy a quadratic isoperimetric inequality.

The purpose of the current paper is to show that for cycles in $7$-located complexes, there is a quadratic isoperimetric inequality.
We prove that the disc in the diagram associated to a cycle in a simply connected, $7$-located complex, is itself $7$-located. Then we show that such a disc satisfies a quadratic isoperimetric inequality. To prove this, we use a method introduced in \cite{ChaCHO}.

\textbf{Acknowledgements.} The author would like to thank Damian Osajda for introducing her to the subject.
This work was partially supported by the grant $346300$ for IMPAN from the Simons Foundation and the matching $2015-2019$ Polish MNiSW fund.

\section{Preliminaries}

Let $X$ be a simplicial complex. We denote by $X^{(k)}$ the $k$-skeleton of
$X, 0 \leq k < \dim X$. A subcomplex $L$ in $X$ is called \emph{full} as a subcomplex of $X$ if any simplex of $X$ spanned by a set of vertices in $L$, is a simplex of $L$. For a set
$A = \{ v_{1}, ..., v_{k} \}$ of vertices of $X$, by $\langle  A \rangle$ or by $\langle  v_{1}, ..., v_{k} \rangle$ we denote the \emph{span} of $A$, i.e. the
smallest full subcomplex of $X$ that
contains $A$. We write $v \sim v'$ if $\langle  v,v' \rangle \in X$ (it can happen that $v = v'$). We write $v \nsim v'$ if $\langle  v,v' \rangle \notin X$.
 We call $X$ {\it flag} if any finite set of vertices which are pairwise connected by
edges of $X$, spans a simplex of $X$.

A {\it cycle} ({\it loop}) $\gamma$ in $X$ is a subcomplex of $X$ isomorphic to a triangulation of $S^{1}$. A \emph{full cycle} in $X$ is a cycle that is full as a subcomplex of $X$.
A $k$-\emph{wheel} in $X$ $(v_{0}; v_{1}, ..., v_{k})$ (where $v_{i}, i \in \{0,..., k\}$
are vertices of $X$) is a subcomplex of $X$ such that $(v_{1}, ..., v_{k})$ is a full cycle and $v_{0} \sim v_{1}, ..., v_{k}$.
The \emph{length} of $\gamma$ (denoted by $|\gamma|$) is the number of edges in $\gamma$.

We define the \emph{metric} on the $0$-skeleton of $X$ as the number of edges in the shortest $1$-skeleton path joining two given vertices and we denote it by $d$. A \emph{ball (sphere)}
$B_{i}(v,X)$ ($S_{i}(v,X)$) of radius $i$ around some vertex $v$ is a full subcomplex of $X$ spanned by vertices at distance at most $i$ (at distance $i$) from $v$.

\begin{definition}\label{def-2.1}
A simplicial complex is $m$-\emph{located} if it is flag and every full homotopically trivial loop of length at most $m$ is contained in a $1$-ball.
\end{definition}

Let $\sigma$ be a simplex of $X$. The \emph{link} of $X$ at $\sigma$, denoted $X_{\sigma}$, is the subcomplex of $X$ consisting of all simplices of $X$ which are disjoint from $\sigma$ and which, together
with $\sigma$, span a simplex of $X$.

\begin{definition}\label{2.2}
A \emph{simplicial map} $f : X \rightarrow Y$ between simplicial complexes $X$ and $Y$ is a map
which sends vertices to vertices, and whenever vertices $v_{0}, ..., v_{k} \in X$ span a simplex
$\sigma$ of $X$ then their images span a simplex $\tau$ of $Y$ and we have $f(\sigma) = \tau$.
Therefore a simplicial map is determined by its values on the vertex set of $X$. A
simplicial map is called \emph{nondegenerate} if it is injective on each simplex.
\end{definition}

\begin{definition}\label{2.3}
Let $\gamma$ be a cycle in $X$. A \emph{filling diagram} for $\gamma$ is a simplicial map $f : D \rightarrow X$ where $D$ is a triangulated $2$-disc, and
$f | _{\partial D}$ maps $\partial D$ isomorphically onto $\gamma$. We denote a filling diagram for $\gamma$ by $(D,f)$ and we say it is:

$\bullet$ \emph{minimal} if $D$ has minimal area i.e. it consists of the least possible number of $2$-simplices among filling diagrams for $\gamma$;

$\bullet$ \emph{nondegenerate} if $f$ is a nondegenerate map;

\end{definition}

\begin{lemma}\label{2.4}
Let $X$ be a simplicial complex and let $\gamma$ be a homotopically trivial loop in $X$. Then:
\begin{enumerate}
\item there exists a filling diagram $(D, f)$ for $\gamma$ (see \cite{Ch} - Lemma $5.1$, \cite{JS1} - Lemma $1.6$ and \cite{Pr} - Theorem $2.7$);
\item any minimal filling diagram for $\gamma$ is simplicial and nondegenerate (see \cite{Ch} - Lemma $5.1$, \cite{JS1} - Lemma $1.6$, Lemma $1.7$ and \cite{Pr} - Theorem $2.7$).

\end{enumerate}
\end{lemma}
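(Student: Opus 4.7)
The plan is to handle the two parts of the lemma separately using standard disc-diagram arguments.

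For existence, I would start from a null-homotopy of $\gamma$: choose a continuous extension $h \colon D^{2} \to X$ with $h|_{\partial D^{2}}$ parameterizing $\gamma$. Triangulate $\partial D^{2}$ compatibly with the vertex structure of $\gamma$, extend this to a triangulation of $D^{2}$, and refine sufficiently so that the simplicial approximation theorem (applied rel boundary, where $h$ is already simplicial) produces a simplicial map $f \colon D \to X$ homotopic to $h$ rel $\partial D^{2}$. By construction $f|_{\partial D}$ is an isomorphism onto $\gamma$, so $(D,f)$ is the desired filling diagram.

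For the second part, simpliciality is built into Definition~\ref{2.3}, so the real content is nondegeneracy. Let $(D,f)$ be minimal and assume for contradiction that $f$ is degenerate. Then $f$ fails to be injective on some simplex, so there is an edge $\langle u, v \rangle$ of $D$ with $f(u)=f(v)$. Since $f|_{\partial D}$ is an isomorphism onto $\gamma$, it is injective on boundary vertices; hence $u$ and $v$ cannot both be distinct vertices of $\partial D$, which forces $\langle u,v \rangle$ to be an interior edge. It is therefore adjacent to exactly two triangles $T_{1}, T_{2}$, each of which $f$ collapses onto an edge of $X$. I would contract $\langle u,v\rangle$ by identifying $u$ with $v$ in $D$; this removes $T_{1}$ and $T_{2}$, and the map $f$ descends to a simplicial map $f' \colon D' \to X$ whose restriction to $\partial D'$ remains an isomorphism onto $\gamma$. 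The resulting filling diagram has strictly fewer $2$-simplices than $(D,f)$, contradicting minimality.

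The main obstacle lies in verifying that this contraction actually yields an honest triangulated $2$-disc $D'$: identifying $u$ and $v$ can create a pair of parallel edges between the merged vertex and a common neighbor $w$ of $u$ and $v$, or can force non-triangular $2$-cells. The remedy is to iterate further edge collapses and face removals, each strictly decreasing the number of $2$-simplices while preserving the boundary (since $f|_{\partial D}$ was already vertex-injective, no boundary identifications are required). Once this reduction terminates at a triangulated disc, the strict area decrease produces the required contradiction and proves nondegeneracy.
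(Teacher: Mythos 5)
The paper itself gives no proof of Lemma~\ref{2.4}: it is quoted from \cite{Ch}, \cite{JS1} and \cite{Pr}, so your sketch can only be measured against those standard arguments, which it follows in outline. Part (1) is fine: applying the \emph{relative} (Zeeman) simplicial approximation theorem to a null-homotopy, keeping the chosen boundary triangulation fixed so that $f|_{\partial D}$ remains the given isomorphism onto $\gamma$, is exactly how existence is obtained in the cited sources. Your reduction of part (2) is also correct as far as it goes: since a filling diagram is simplicial by Definition~\ref{2.3}, the content is nondegeneracy, and a degeneracy produces an edge $\langle u,v\rangle$ of $D$ with $f(u)=f(v)$, which must be an interior edge because $f|_{\partial D}$ is injective on vertices.

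The gap is precisely at the step you flag and then defer. Contracting $\langle u,v\rangle$ yields a simplicial triangulated disc only when the sole common neighbours of $u$ and $v$ are the two vertices $a,b$ opposite that edge; when there is another common neighbour $x$, i.e.\ a $3$-cycle $(u,v,x)$ that is not a face of $D$, your remedy ``iterate further edge collapses and face removals'' is not a defined procedure: what kills the resulting pair of parallel edges is not an edge collapse but the excision of the whole subdisc of $D$ bounded by $(u,v,x)$ followed by identification of the two doubled edges, and you verify neither that this iteration terminates in a triangulated disc, nor that $\partial D$ and the isomorphism onto $\gamma$ survive, nor that $f$ descends simplicially at each stage -- and this verification is the actual content of the lemma in \cite{Ch} and \cite{JS1}. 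The cleanest repair is a case split \emph{before} contracting. If such an $x$ exists, the subdisc of $D$ bounded by $(u,v,x)$ has interior disjoint from $\partial D$ and, by Pick's formula ($V_{b}=3$, $V_{i}\geq 1$ since the cycle is not a face), area at least $3$; replacing it by the single triangle $\langle u,v,x\rangle$, mapped to the simplex $\langle f(u),f(x)\rangle=\langle f(v),f(x)\rangle$ of $X$, already gives a filling diagram of strictly smaller area, contradicting minimality with no contraction needed. If no such $x$ exists, the link condition for contracting an edge of a triangulated surface is satisfied (the residual possibility that both $\langle u,a,b\rangle$ and $\langle v,a,b\rangle$ are faces would make $D$ contain the boundary of a tetrahedron, a closed surface, which is impossible in a disc), so the contraction does produce a triangulated disc with two fewer triangles, $f$ descends because $f(u)=f(v)$, and $\partial D$ is untouched since at most one of $u,v$ lies on it. With that case distinction your argument closes; as written, the decisive step is missing.
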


\begin{lemma}\label{2.5}
Let $X$ be a simplicial complex and let $\gamma$ be a homotopically trivial loop in $X$. Let $(D,f)$ be a minimal filling diagram for $\gamma$. Then adjacent $2$-simplices of $D$ have distinct images under $f$ (see \cite{Ch} - Lemma $5.1$).

\end{lemma}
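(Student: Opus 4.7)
The plan is to proceed by contradiction, producing a filling diagram of strictly smaller area than $(D,f)$ from a pair of adjacent $2$-simplices with the same image. Suppose $\sigma_{1}, \sigma_{2}$ are adjacent $2$-simplices of $D$ sharing an edge $e$, with $f(\sigma_{1}) = f(\sigma_{2})$. Write $\sigma_{1} = \langle a,b,c \rangle$ and $\sigma_{2} = \langle a,b,d \rangle$ where $e = \langle a,b \rangle$. Since $f|_{\partial D}$ is an isomorphism onto $\gamma$, every boundary edge of $D$ lies in exactly one $2$-simplex of $D$; hence the shared edge $e$ must be interior to $D$. By Lemma \ref{2.4}(2), $(D,f)$ is simplicial and nondegenerate, so $f$ restricts to a bijection on each $\sigma_{i}$; the equality $f(\sigma_{1}) = f(\sigma_{2})$ of $2$-simplices in $X$ therefore forces $f(c) = f(d)$, and consequently $f(\langle a,c \rangle) = f(\langle a,d \rangle)$ and $f(\langle b,c \rangle) = f(\langle b,d \rangle)$.

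Next I would construct the smaller diagram $(D', f')$ as follows. Remove the open $2$-simplices $\sigma_{1}, \sigma_{2}$ and the open edge $e$ from $D$; then identify the vertex $c$ with $d$, the edge $\langle a,c \rangle$ with $\langle a,d \rangle$, and the edge $\langle b,c \rangle$ with $\langle b,d \rangle$. Call the resulting complex $D'$ and define $f'$ on $D'$ by letting it agree with $f$ on the unaltered portion of $D$; the identifications above guarantee that $f'$ is well-defined and simplicial. By construction $D'$ has two fewer $2$-simplices than $D$, and $f'|_{\partial D'}$ still maps $\partial D'$ isomorphically onto $\gamma$. Hence once $D'$ is known to be a triangulated $2$-disc, $(D', f')$ is a filling diagram for $\gamma$ of smaller area, contradicting minimality of $(D,f)$.

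The main obstacle is verifying that $D'$ is indeed a triangulated $2$-disc. The open bigon $\sigma_{1} \cup e \cup \sigma_{2}$ is a small quadrilateral region inside $D$ with cyclic boundary $a, c, b, d$, and its complement in $D$ is itself a disc (after removing an interior bigon) with the four edges $\langle a,c \rangle, \langle c,b \rangle, \langle b,d \rangle, \langle d,a \rangle$ appearing on its inner boundary. The identifications fold this inner boundary in pairs onto a single arc $\langle a, c \rangle \cup \langle c, b \rangle$, and a short check of links at the vertices $a, b$, and the merged vertex $c = d$ shows that the quotient is again a topological $2$-disc triangulated as a simplicial complex. One must also rule out the degenerate cases $c = d$ (impossible, as then $\sigma_{1} = \sigma_{2}$) and $\langle c,d \rangle \in D$ (impossible by nondegeneracy, since $f(c) = f(d)$ would collapse such an edge). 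With these verifications in place, minimality of $(D,f)$ is contradicted, completing the proof.
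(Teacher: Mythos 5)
You should note at the outset that the paper itself gives no argument for Lemma \ref{2.5}: it is quoted from Chepoi (Lemma $5.1$). So your proposal has to stand or fall as a reconstruction of the standard cut-and-fold argument, which is indeed the strategy you adopt, and much of it is set up correctly: the shared edge $e$ must be interior, nondegeneracy forces $f(c)=f(d)$, and the degenerate cases $c=d$ and $\langle c,d\rangle\in D$ are correctly excluded.

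The genuine gap is at exactly the point you yourself call the main obstacle: the assertion that a short check of links shows the quotient $D'$ is again a triangulated $2$-disc is not true in general. If $c$ and $d$ have a common neighbour $x\notin\{a,b\}$ in $D$ (for instance when $a$ is an interior vertex of degree $4$ with link the cycle $(b,c,x,d)$), then after identifying $c$ with $d$ the edges $\langle x,c\rangle$ and $\langle x,d\rangle$ become two distinct $1$-cells with the same endpoints, and $\langle a,c,x\rangle$, $\langle a,x,d\rangle$ become two $2$-cells on the same vertex set; the quotient is then not a simplicial complex, so $(D',f')$ is not a filling diagram in the sense of Definition \ref{2.3} and minimality is not yet contradicted. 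You cannot dismiss this configuration, because the extra adjacent pairs it contains also have equal $f$-images -- that is precisely the phenomenon you are in the middle of disproving. A complete proof must handle such foldings, e.g.\ by iterating the reduction on the doubled cells it creates, by choosing an innermost offending pair, or by working with singular diagrams and repairing them afterwards; this is why the careful published treatments (such as the appendix of Prytula cited in Lemma \ref{2.4}) are substantially longer than one paragraph. A smaller omission: in the boundary cases you also need the injectivity of $f|_{\partial D}$ to rule out $c$ and $d$ both lying on $\partial D$ (otherwise the identification pinches the boundary circle and $f'|_{\partial D'}$ no longer parametrizes $\gamma$); you use the boundary isomorphism only to place $e$ in the interior and never for this. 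With the common-neighbour issue unaddressed the proof as written does not go through, although the overall strategy is the correct and standard one.
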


Let $D$ be a simplicial disc.
We denote by $C$ the cycle bounding $D$ and by $\rm{Area} C$ the area of $D$. We denote by $V_{i}$ and $V_{b}$ the numbers of internal and boundary
vertices of $D$, respectively. Then:
$\rm{Area} C = 2 V_{i} + V_{b} - 2 = |C| + 2 (V_{i} - 1)$ (Pick's formula).
In particular, the area of a simplicial disc depends only on the numbers of its internal and
boundary vertices.

\begin{definition}\label{2.6}
Given a path $\gamma = (v_{0}, v_{1}, ..., v_{n})$ in a simplicial complex $X$, one can \emph{tighten} it to a full path $\gamma'$ with the same endpoints by repeatedly applying the following operations:

$\bullet$ if $v_{i}$ and $v_{j}$ are adjacent in $X$ for some $j > i+1$, then remove from the sequence all $v_{k}$ where $i < k < j$;

$\bullet$ if $v_{i}$ and $v_{j}$ coincide in $X$ for some $j > i$, then remove from the sequence all $v_{k}$ where $i < k \leq j$.

The tightening of a full loop is the loop itself.

\end{definition}

\section{Quadratic isoperimetric inequality for $7$-located complexes}

We start with a useful lemma.

\begin{lemma}\label{3.1}
Let $X$ be a simplicial complex and let $\gamma$ be a homotopically trivial loop in $X$. Let $(D, f)$ be a minimal filling diagram for $\gamma$.
We consider in $D$ an interior vertex $v$ such that $D_{v} \leq k$, $4 \leq k \leq 7$. Then the map $f$ is injective on $X_{f(v)}$.
\end{lemma}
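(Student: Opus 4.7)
\medskip
\noindent\textbf{Proof proposal.}

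Since $v$ is an interior vertex of the simplicial disc $D$, its link $D_v$ is a cycle; write $D_v = (v_1, \ldots, v_d)$ cyclically with $d = |D_v| \leq k \leq 7$. Because $f$ is simplicial, its restriction to $D_v$ takes values in $X_{f(v)}$, and the content of the statement is that $f|_{D_v}$ is injective on vertices. I argue by contradiction: suppose $f(v_i) = f(v_j)$ for distinct $v_i, v_j \in D_v$, and choose such a pair minimising the cyclic distance $r = d_{D_v}(v_i, v_j)$. Since $d \leq 7$, this gives $1 \leq r \leq \lfloor d/2 \rfloor \leq 3$, and the proof splits naturally into the three cases $r=1,2,3$.

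The cases $r = 1$ and $r = 2$ follow directly from Lemmas \ref{2.4} and \ref{2.5}. If $r = 1$, then the edge $\langle v_i, v_j\rangle$ lies in $D$, so the identification $f(v_i) = f(v_j)$ contradicts the nondegeneracy of $f$ supplied by Lemma \ref{2.4}. If $r = 2$, let $v_\ell$ be the unique intermediate vertex on the short arc; the 2-simplices $\langle v, v_i, v_\ell\rangle$ and $\langle v, v_\ell, v_j\rangle$ of $D$ are then adjacent along the edge $\langle v, v_\ell\rangle$ and their images under $f$ both equal $\langle f(v), f(v_i), f(v_\ell)\rangle$, contradicting Lemma \ref{2.5}.

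The substantive case is $r = 3$, which forces $d \in \{6, 7\}$. Write the short arc as $v_i, v_\alpha, v_\beta, v_j$ and consider the sub-fan $F = T_1 \cup T_2 \cup T_3$ of $\mr{St}(v, D)$, where $T_1 = \langle v, v_i, v_\alpha\rangle$, $T_2 = \langle v, v_\alpha, v_\beta\rangle$, $T_3 = \langle v, v_\beta, v_j\rangle$. Under $f$ the outer boundary of $F$ closes up (because $f(v_j) = f(v_i)$) into a triangular walk $f(v_i), f(v_\alpha), f(v_\beta), f(v_i)$ in $X_{f(v)}$, so the three edges $f(v_i)f(v_\alpha)$, $f(v_\alpha)f(v_\beta)$, $f(v_\beta)f(v_i)$ all lie in $X$. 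The plan is to contradict the minimality of $(D, f)$ by a local surgery: since the two edges $\langle v, v_i\rangle$ and $\langle v, v_j\rangle$ of $D$ have the same image under $f$, one identifies them, thereby also identifying $v_i$ with $v_j$, and then, using flagness of the ambient 7-located complex $X$ to realise the triangle on $\{f(v_i), f(v_\alpha), f(v_\beta)\}$ as a genuine 2-simplex of $X$, refills the pinched region by a strictly smaller sub-complex. This produces a filling diagram for $\gamma$ with fewer 2-simplices than $(D, f)$, contradicting minimality.

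The main obstacle is making the $r=3$ surgery rigorous: one must verify that the identifications really yield a bona fide triangulated 2-disc $D'$, produce a simplicial map $f'\colon D' \to X$ agreeing with $f$ on $\partial D$, and confirm that the triangle count strictly decreases. In particular one must rule out figure-eight singularities at the pinch point $v_i = v_j$, describe the simplicial extension explicitly, and treat separately the possibility that $v_i$ or $v_j$ itself lies on $\partial D$, so that the surgery remains compatible with the prescribed boundary labelling of $\gamma$.
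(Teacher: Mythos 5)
Your reduction to cyclic distances $r=1,2,3$ in the link cycle $D_v$ is sound, and your cases $r=1$ (nondegeneracy of $f$, Lemma \ref{2.4}) and $r=2$ (two adjacent $2$-simplices of $D$ with equal images, Lemma \ref{2.5}) coincide with what the paper does. The problem is that the entire content of the lemma sits in the case $r=3$, and there you do not give a proof: you announce a pinching surgery (identify $\langle v, v_i\rangle$ with $\langle v, v_j\rangle$, discard the fan, refill with one triangle) and then, in your closing paragraph, list exactly the points you have not verified --- that the quotient is again a triangulated $2$-disc, that a simplicial $f'$ exists and agrees with $f$ on $\partial D$, that the area strictly drops, the pinch-point and boundary-vertex cases. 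These are not routine afterthoughts; the identification changes the global disc structure (it turns the complement of the fan into an annulus that must be re-closed), and checking that the result is a legitimate filling diagram for $\gamma$ is precisely the nontrivial step. In addition, your refill needs $\{f(v_i), f(v_\alpha), f(v_\beta)\}$ to span a $2$-simplex of $X$, which you extract from flagness of ``the ambient $7$-located complex''; the lemma as stated assumes only that $X$ is a simplicial complex, so this is an extra hypothesis you are importing (harmless in the paper's applications, where $X$ is $7$-located and hence flag, but it should be flagged).

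The paper's own treatment of $r=3$ avoids all of these difficulties and is worth adopting. Instead of pinching, it re-triangulates only the sub-disc of $D$ formed by the four triangles $\langle v, v_{i+j}, v_{i+j+1}\rangle$, $0 \leq j \leq 3$, replacing the cone from $v$ over the arc $v_i,\dots,v_{i+4}$ by the triangle $\langle v, v_i, v_{i+4}\rangle$ together with the cone from $v_i$, i.e.\ $\langle v_i, v_{i+j}, v_{i+j+1}\rangle$, $1 \leq j \leq 3$. This keeps a triangulated disc with the same boundary and, crucially, the \emph{same} area, so the new diagram $(D',f')$ is again minimal; the coincidence $f(v_i)=f(v_{i+3})$ is what allows $f'$ to be defined simplicially on the new triangles (their images are images of old triangles or edges of $X$). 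But then $f'$ collapses the edge $\langle v_i, v_{i+3}\rangle$ of $D'$ to a single vertex, so $f'$ is degenerate, contradicting Lemma \ref{2.4}(2). Thus the contradiction is with nondegeneracy of minimal diagrams rather than with minimality of area: no strict area decrease, no quotient construction, and no separate analysis of boundary vertices is needed. Either carry out in full the verifications your surgery requires, or replace your $r=3$ step by this area-preserving flip.
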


\begin{proof}

Let $D_{v} = (v_{1}, v_{2}, ..., v_{k}), 4 \leq k \leq 7$.
Because $(D,f)$ is a minimal filling diagram, Lemma \ref{2.4} implies that the map $f$ is simplicial and nondegenerate. Therefore, since in $D$ there are simplices $v, v_{j}$, $\langle v, v_{j} \rangle$, $1 \leq j \leq k$, $\langle v_{j-1}, v_{j} \rangle$, $2 \leq j \leq k$, $\langle v_{k},v_{1} \rangle$, in $X$ there are simplices $f(v), f(v_{j})$, $\langle f(v), f(v_{j}) \rangle$, $1 \leq j \leq k$, $\langle f(v_{j-1}), f(v_{j}) \rangle$, $2 \leq j \leq k$, $\langle f(v_{k}),f(v_{1}) \rangle$.
Lemma \ref{2.5} implies that adjacent $2$-simplices of $D$ have distinct images under $f$.
Hence $f(v_{i\, mod\, k\; +\; 1}) \neq f(v_{(i+2)\, mod\, k\; +\; 1}), 1 \leq i \leq k$.

We show further that $f(v_{j\, mod\, k\; +\; 1}) \neq f(v_{(j+3)\, mod\, k\; +\; 1})$, $1 \leq j \leq k$.
Suppose by contradiction there exists $i$ such that $f(v_{i\, mod\, k\; +\; 1}) = f(v_{(i+3)\, mod\, k\; +\; 1})$, $1 \leq i \leq k$.
We choose a filling diagram $(D',f')$ for $\gamma$ such that in $D'$ we have
$v_{i\, mod\, k\; +\; 1} \sim v_{(i+j)\, mod\, k\; +\; 1}$, $2 \leq j \leq 4$.
We triangulate $D'$ with the same simplices like $D$ except for the triangles
$\langle v, v_{(i+j)\, mod\, k\; +\; 1}, v_{(i+j+1)\, mod\, k\; +\; 1} \rangle, 0 \leq j \leq 3$ in $D$ which are replaced in $D'$ by the triangles
$\langle v, v_{i\, mod\, k\; +\; 1}, v_{(i+4)\, mod\, k\; +\; 1} \rangle,$ $\langle v_{i\, mod\, k\; +\; 1}, v_{(i+j)\, mod\, k\; +\; 1}, v_{(i+j+1)\, mod\, k\; +\; 1} \rangle,$ $1 \leq j \leq 3$.
We define $f'$ such that it coincides with $f$ on all simplices which are common to $D$ and $D'$.
We define $f'$ such that $f'(v_{i\, mod\, k\; +\; 1}) = f'(v_{(i+3)\, mod\, k\; +\; 1}) = f(v_{i\, mod\, k\; +\; 1})$,\\
$f'(\langle v_{i\, mod\, k\; +\; 1}, v_{(i+3)\, mod\, k\; +\; 1} \rangle) = f(v_{i\, mod\, k\; +\; 1})$.
As argued above $f(v_{j\, mod\, k\; +\; 1}) \neq f(v_{(j+2)\, mod\, k\; +\; 1}), 1 \leq j \leq k$.
Since in $X$ we have $f(v_{(i+3)\, mod\, k\; +\; 1})$ $\sim$ \\ $f(v_{(i+2)\, mod\, k\; +\; 1})$, we may define $f'$ such that $f'(v_{(i+3)\, mod\, k\; +\; 1})$ $\sim$ $f'(v_{(i+2)\, mod\, k\; +\; 1})$.
Then because $f'(v_{i\, mod\, k\; +\; 1})$ $=$ $f'(v_{(i+3)\, mod\, k\; +\; 1})$, we have $f'(v_{i\, mod\, k\; +\; 1})$ $\sim$ \\ $f'(v_{(i+2)\, mod\, k\; +\; 1})$.
We define $f'$ such that $f'(\langle v_{i\, mod\, k\; +\; 1}, v_{(i+2)\, mod\, k\; +\; 1} \rangle)$ $=$ \\ $\langle f'(v_{i\, mod\, k\; +\; 1}), f'(v_{(i+2)\, mod\, k\; +\; 1}) \rangle$ $=$ $\langle f(v_{i\, mod\, k\; +\; 1}), f(v_{(i+2)\, mod\, k\; +\; 1}) \rangle$.
One can similarly show that we can define $f'$ such that $f'(\langle v_{i\, mod\, k\; +\; 1}, v_{(i+4)\, mod\, k\; +\; 1} \rangle) = \langle f'(v_{i\, mod\, k\; +\; 1}), f'(v_{(i+4)\, mod\, k\; +\; 1}) \rangle = \langle f(v_{i\, mod\, k\; +\; 1}), f(v_{(i+4)\, mod\, k\; +\; 1}) \rangle$.
We define $f'$ such that $f'(\langle v, v_{i\, mod\, k\; +\; 1}, v_{(i+4)\, mod\, k\; +\; 1} \rangle)$ $=$ \\ $\langle f(v), f(v_{i\, mod\, k\; +\; 1}),$
$f(v_{(i+4)\, mod\, k\; +\; 1})$ $\rangle$,\\
$f'(\langle v_{i\, mod\, k\; +\; 1}, v_{(i+j)\, mod\, k\; +\; 1}, v_{(i+j+1)\, mod\, k\; +\; 1} \rangle)$ $=$ \\ $\langle f(v_{i\, mod\, k\; +\; 1}), f(v_{(i+j)\, mod\, k\; +\; 1}), f(v_{(i+j+1)\, mod\, k\; +\; 1}) \rangle, 1 \leq j \leq 3$.
Hence, since $f$ is simplicial, $f'$ is also simplicial.
So $(D', f')$ is indeed a filling diagram for $\gamma$.
Note that $D$ and $D'$ have the same area.
Therefore $D'$ has minimal area. Then Lemma \ref{2.4} implies that the map $f'$ is nondegenerate.
But since $f'(v_{i\, mod\, k\; +\; 1}) = f'(v_{(i+3)\, mod\, k\; +\; 1})$, $f'$ is degenerate.
Because we have reached a contradiction, $f(v_{i\, mod\, k\; +\; 1}) \neq f(v_{(i+3)\, mod\, k\; +\; 1}),$ $1 \leq i \leq k$.

In conclusion the map $f$ is injective on $X_{f(v)}$.

\end{proof}

Next we prove the minimal filling diagrams lemma for $7$-located simplicial complexes.

\begin{lemma}\label{3.6}
Let $X$ be a $7$-located simplicial complex and let $\gamma$ be a homotopically trivial loop in $X$. Let $(D,f)$ be a minimal filling diagram for $\gamma$. Then $D$ is $7$-located.
\end{lemma}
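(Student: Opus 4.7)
The plan is to verify both conditions of $7$-location for $D$: flagness, and the containment of every full loop of length at most $7$ in a $1$-ball of $D$. Flagness of $D$ will follow from the minimality of $(D,f)$: if a $3$-cycle of edges $(a,b,c)$ in $D$ bounded a subdisc of positive area, then since $f$ is simplicial and $X$ is flag, the pairwise-adjacent images $f(a),f(b),f(c)$ would span a $2$-simplex of $X$, so one could replace the subdisc by a single triangle mapping to this image, producing a filling diagram of strictly smaller area, contradicting minimality.

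For the main condition, let $\gamma' = (v_1,\dots,v_n)$ be a full loop in $D$ with $3 \le n \le 7$. Since $D$ is a disc, $\gamma'$ bounds a subdisc $D'$, whose area equals $2V_i + n - 2$ by Pick's formula, where $V_i$ is the number of interior vertices of $D'$. I would then split into cases on $V_i$. If $V_i = 0$, then for $n \ge 4$ any triangulation of the $n$-gon $D'$ uses a diagonal, i.e.\ an edge of $D$ between non-consecutive vertices of $\gamma'$, which contradicts the fullness of $\gamma'$; hence $n = 3$ and $\gamma'$ is a single triangle contained in $B_1(v_1,D)$. If $V_i = 1$ with interior vertex $v_0$, then every $2$-simplex of $D'$ must contain $v_0$, because a triangle with only boundary vertices would again provide a chord of $\gamma'$; consequently $v_0 \sim v_i$ for all $i$, so $\gamma' \subset B_1(v_0,D)$.

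The remaining case $V_i \ge 2$ is the one to exclude. My strategy is to apply the $7$-location of $X$ to $f(\gamma')$, a loop of length at most $7$ in $X$, to obtain a vertex $u \in X$ adjacent to every $f(v_i)$, and then to replace the subdisc $D'$ inside $D$ by the combinatorial cone $D''$ over $\gamma'$ with apex a new interior vertex $w_0$, extending $f$ by $f''(w_0) = u$. Flagness of $X$ guarantees that each triangle $\langle w_0, v_i, v_{i+1} \rangle$ of $D''$ maps to a $2$-simplex of $X$, so $(D'', f'')$ is a valid filling diagram of $\gamma$. Pick's formula then gives area $n$ for $D''$, strictly less than $2V_i + n - 2 \ge n + 2$ for $D'$, so the total area strictly decreases, contradicting the minimality of $(D, f)$.

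The main obstacle will be producing the vertex $u$ adjacent to \emph{every} $f(v_i)$, rather than only to the vertices surviving a tightening of $f(\gamma')$ to a full loop of $X$. To close this gap, I plan to use the nondegeneracy of $f$ (Lemma~\ref{2.4}), the injectivity of $f$ on links provided by Lemma~\ref{3.1}, and local retriangulation arguments in the spirit of the proof of Lemma~\ref{3.1}, to show that the minimality of $(D,f)$ forces $f$ to be injective on the vertices of $\gamma'$ and $f(\gamma')$ to be a full loop of length $n$ in $X$. Any identification $f(v_i) = f(v_j)$ or shortcut edge $f(v_i) \sim f(v_j)$ between non-consecutive indices would permit a local retriangulation of $D'$ that saves area, again contradicting minimality. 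With $f(\gamma')$ full of length at most $7$ in $X$, the $7$-location of $X$ directly produces the required $u$, and the replacement step yields the desired contradiction.
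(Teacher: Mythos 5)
Your outline is sound up to exactly the point you yourself flag: the flagness argument, the cases $V_i=0$ and $V_i=1$, and the cone replacement over a vertex $u$ adjacent to all $f(v_i)$ are fine. But the step you defer --- showing that minimality forces $f$ to be injective on the vertices of $\gamma'$ and $f(\gamma')$ to be a \emph{full} loop of $X$ --- is the actual content of the lemma, and the tools you cite do not deliver it. Lemma~\ref{3.1} gives injectivity of $f$ only on the link cycle of an interior vertex of degree at most $7$, i.e.\ precisely in the configuration you are trying to reach, so invoking it for an arbitrary full loop of $D$ is circular; and the asserted area-saving retriangulation in the presence of an image chord $f(v_i)\sim f(v_j)$ does not obviously exist. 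Cutting $D'$ along such a chord produces two loops in $X$ that need not be homotopically trivial (the lemma does not assume $X$ simply connected, only that $\gamma$ is null-homotopic), so they need not bound any diagram at all; and even when they do, the area need not strictly drop. Concretely, for a full $7$-loop $\gamma'$ in $D$ with exactly two interior vertices the subdisc has area $2\cdot 2+7-2=9$, while a chord $f(v_1)\sim f(v_4)$ splits the image into a $4$-loop and a $5$-loop whose cone fillings already total $4+5=9$: minimality is not contradicted, so the image is not forced to be chord-free. The identification case $f(v_i)=f(v_j)$ has the same defect: after tightening, the vertex supplied by $7$-location of $X$ is adjacent only to the surviving images, which is exactly the obstacle you acknowledge. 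So as written the proposal has a genuine gap at its crucial step.

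For comparison, the paper's own proof runs in the opposite direction: it starts from a full cycle $\beta$ of length $7$ in $X$ (contained, by $7$-location of $X$, in the link of a vertex $x$), pulls it back via nondegeneracy of $f$ to a loop $\alpha$ in $D$, uses Lemma~\ref{3.1} to conclude that $\alpha$ is full in $D$, and then, if $\alpha$ had two or more interior vertices, retriangulates the subdisc bounded by $\alpha$ as a cone over a single interior vertex mapped to $x$, contradicting minimality of the area. In other words, the paper places the fullness hypothesis on the cycle in $X$ rather than deriving it for the image of a full cycle of $D$; the transfer between ``full in $D$'' and ``full in $X$'' is precisely what your plan leaves open, and it is not closed by the retriangulation heuristic you propose.
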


\begin{proof}

Because $(D,f)$ is a minimal filling diagram,  Lemma \ref{2.4} implies that the map $f$ is simplicial and nondegenerate. Therefore, since in $D$ there are simplices $v, v_{i}$, $\langle v, v_{i} \rangle$, $1 \leq i \leq k$, $\langle v_{i-1}, v_{i} \rangle$, $2 \leq i \leq k$, $\langle v_{1},v_{k} \rangle$, in $X$ there are simplices $f(v), f(v_{i})$, $\langle f(v), f(v_{i}) \rangle$, $1 \leq i \leq k$, $\langle f(v_{i-1}), f(v_{i}) \rangle$, $2 \leq i \leq k$, $\langle f(v_{1}),f(v_{k}) \rangle$.

Let $\beta = (w_{1}, ... , w_{7})$ be a full cycle in $X$. Because $X$ is $7$-located and $\beta$ has length $7$, it is contained in the link of a vertex $x$.
Since $f$ is simplicial and nondegenerate, there are vertices $v_{i} \in D, 1 \leq i \leq 7$ such that $f(\langle v_{i},v_{i+1} \rangle) = \langle w_{i},w_{i+1} \rangle, 1 \leq i \leq 6$. So the loop $\alpha = (v_{1}, ..., v_{7})$ in $D$ also has length $7$.
We show that $\alpha$ is full. Suppose, by contradiction, that $v_{1} \sim v_{4}$. Then, due to Lemma \ref{3.1}, in $X$ we have $w_{1} \sim w_{4}$. Since $\beta$ is full, this implies a contradiction. So $v_{1} \nsim v_{4}$. One can similarly show that $v_{1} \nsim v_{3}$. Hence $\alpha$ is full.

Suppose by contradiction that $\alpha$ is not contained in the link of a vertex. Because $\alpha$ is full, there are at least two vertices in the interior of $\alpha$. Assume at first there are two such vertices, say $z$ and $y$. Obviously $z \sim y$. Assume w.l.o.g. $D_{z} = (y, v_{3}, v_{2}, v_{1}, v_{7})$ and $D_{y} = (v_{3}, v_{4}, v_{5}, v_{6}, v_{7}, z)$. For any other triangulation of $D$, we proceed similarly.
We consider a minimal filling diagram $(D',f')$ for $\gamma$ such that $D'$ is triangulated with the same simplices like $D$ except for the triangles $\langle z, v_{i}, v_{i+1} \rangle$, $1 \leq i \leq 2$, $\langle z, v_{3}, y \rangle$, $\langle z, v_{7}, y \rangle$, $\langle z, v_{7}, v_{1} \rangle$
in $D$ which are replaced in $D'$ by the triangles $\langle y, v_{1}, v_{7} \rangle$,
$\langle y, v_{i}, v_{i+1} \rangle, 1 \leq i \leq 2$. So in $D'$ the cycle $\alpha = (v_{1}, ..., v_{7})$ has a single interior vertex $y$. We define $f'$ such that it coincides with $f$ on all simplices which are common to $D$ and $D'$. We define $f'$ such that $f'(y) = x$,
$f'(\langle y,v_{i} \rangle) = \langle x,w_{i} \rangle$, $1 \leq i \leq 2$, $f'(\langle y, v_{i}, v_{i+1} \rangle) = \langle x,w_{i},w_{i+1} \rangle, 1 \leq i \leq 2$, $f'(\langle y, v_{1}, v_{7} \rangle) = \langle x,w_{1},w_{7} \rangle$. Since $f$ is simplicial, $f'$ is also simplicial. Hence $(D',f')$ is indeed a filling diagram for $\gamma$. Note that the area of $D'$ is less than the area of $D$. Because $D'$ has less interior vertices than $D$, this holds also due to Pick's formula.
 Based on the minimality of the area of $D$, we have reached a contradiction. Similarly, if there are at least three vertices inside $\alpha$, arguments similar to those above or Pick's formula, also imply a contradiction. Therefore $\alpha$ is contained in the link of a vertex. One can similarly show that any loop in $D$ of length less than $7$ but at least $4$, is also contained in the link of a vertex. Then $D$ is $7$-located.

%\begin{claim}\label{3.7}
%The loop $\beta$ is full in $X$.
%\end{claim}

%\begin{proof}
%FIG 60 PAG 60

%Suppose $\beta$ is not full. Suppose $w_{6} \sim w_{1}$ and $w_{6} \sim w_{2}$. Other cases can be treated similarly.

%According to the previous lemma, $f(y)$ differs from any vertex
%in $X_{f(y)}$. Moreover, the map $f$ is injective on $X_{f(y)}$.
%We construct a filling diagram $(D',f')$ for $\gamma$ such that in $D'$ we have $v_{6} \sim v_{1}$ and $v_{6} \sim v_{2}$. We triangulate $D'$ with the same triangles like $D$ except for the triangles $\langle v_{2},z,y %\rangle$, $\langle v_{6},z,y \rangle$, $\langle v_{0},v_{6},z \rangle$, $\langle v_{i},v_{i+1},z \rangle, 0 \leq i \leq 1$ in $D$ which are replaced in $D'$ by the triangles $\langle v_{2},v_{6},y \rangle$, $\langle %v_{6},v_{i},v_{i+1} \rangle, 0 \leq i \leq 1$. We define $f'$ such that it coincides with $f$ on all simplices except for $f'(\langle v_{6},v_{1} \rangle) = \langle w_{6},w_{1} \rangle$, $f'(\langle v_{6},v_{2} \rangle) = %\langle w_{6},w_{2} \rangle$, $f'(\langle v_{6},v_{i},v_{i+1} \rangle) = \langle w_{6},w_{i},w_{i+1} \rangle, 0 \leq i \leq 1$, $f'(\langle v_{2},v_{6},y \rangle) = \langle w_{2},w_{6},f(y) \rangle$.
% Then because $f$ is simplicial, $f'$ is also simplicial. $(D',f')$ is therefore indeed a filling diagram for $\gamma$. But $\rm{Area} D' = \rm{Area} D - 2$. Because $D$ has minimal area, this implies a contradiction. So $\beta$ %is full in $X$.

%\end{proof}

\end{proof}

The proof of the main result of the paper relies on the following lemma.
The proof follows closely, even up to the notations, the one given in \cite{ChaCHO} (Lemma $9.2$) for meshed graphs.

\begin{lemma}\label{3.8}
Let $X$ be a simply connected, $7$-located simplicial complex. Let $\gamma$ be a loop in $X$ and let $(D,f)$ be a minimal filling diagram for $\gamma$. Then for any three vertices $u,v,w$ of $D$ such that $v \sim w$ and for any shortest $(u,v)$-path $P$, there is a shortest $(u,w)$-path $Q$ such that $\rm{Area} C \leq \rm{const} \cdot d(u,v)$ where $C$ is the cycle formed by the paths $P, Q$ and the edge $\langle v,w \rangle$. We denote by $\rm{const}$ any natural number such that $\rm{const} > 2$.
\end{lemma}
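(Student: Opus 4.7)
My plan is to model the argument on the proof of Lemma $9.2$ in \cite{ChaCHO} for meshed graphs, substituting the meshed condition with the $7$-location of $D$ (which holds by Lemma \ref{3.6}). I would argue by induction on $n = d(u,v)$ in $D$, building $C$ one step at a time by shortening the current path from $u$.

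For the base cases $n \in \{0,1\}$ the cycle $C$ is at most a triangle, so $\rm{Area} C \leq 1 < \rm{const}$. For $n \geq 2$, let $v'$ be the vertex of $P$ immediately preceding $v$; then $P' = P \setminus \{v\}$ is a shortest $(u,v')$-path of length $n-1$, and by the triangle inequality $d(v',w) \leq 2$. Two subcases arise. In the easy case $v' \sim w$, the inductive hypothesis applied to the triple $(u,v',w)$ with path $P'$ yields a shortest $(u,w)$-path $Q$ and a cycle $C'$ with $\rm{Area} C' \leq \rm{const} \cdot (n-1)$; adjoining the triangle $\langle v',v,w \rangle$ of $D$ produces $C$ with $\rm{Area} C \leq \rm{const} \cdot n$, since $\rm{const} \geq 3 > 1$ absorbs the extra $2$-simplex.

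In the hard case $v' \not\sim w$, one has $d(v',w)=2$, so there exists a common neighbor $w'$ of $v'$ and $w$ in $D$. I would choose such a $w'$ with $d(u,w')=n-1$, apply the inductive hypothesis to $(u,v',w')$ with path $P'$, and take $Q$ to be the concatenation of the resulting shortest $(u,w')$-path with the edge $\langle w',w \rangle$. The local region around $\{v',v,w,w'\}$, bounded by the edges $\langle v',v \rangle$, $\langle v,w \rangle$, $\langle w,w' \rangle$, $\langle w',v' \rangle$, is either composed of two triangles of $D$, or forms a short full loop in $D$ which, by $7$-location of $D$ (Lemma \ref{3.6}), admits a filling with a single interior vertex. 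In either situation the number of $2$-simplices added per inductive step is bounded by an absolute constant, yielding the linear-in-$n$ bound provided $\rm{const}$ is taken to exceed this constant.

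The main obstacle lies in the hard case: securing a common neighbor $w'$ of $v'$ and $w$ that simultaneously satisfies $d(u,w')=n-1$, and then fitting the local configuration around $\{v',v,w,w'\}$ into a full loop short enough to invoke $7$-location. This requires a finite but delicate case analysis of the adjacencies and intersections among the relevant vertices of $D$, exploiting Lemma \ref{3.1} to rule out degeneracies under $f$ (so that, e.g., the vertices $v,v',w,w'$ are pairwise distinct where needed and that the resulting full loop genuinely has length at most $7$). Once this analysis is dispensed with, the induction closes immediately with a constant equal to the maximal number of $2$-simplices adjoined per step.
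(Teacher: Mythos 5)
There is a genuine gap at exactly the point you flag as ``the main obstacle'': in the hard case $v' \nsim w$ you need a common neighbour $w'$ of $v'$ and $w$ with $d(u,w') = n-1$, and nothing in the hypotheses provides it. That existence statement is precisely the triangle/quadrangle (weak modularity/meshedness) condition, which is part of the \emph{definition} of the graphs treated in \cite{ChaCHO}*{Lemma 9.2}; for a $7$-located disc it is neither known nor proved anywhere in this paper, and Lemma \ref{3.1} cannot supply it -- that lemma only gives injectivity of $f$ on links and rules out degenerate images, it says nothing about the existence of a vertex adjacent to both $v'$ and $w$ that is closer to $u$. Deferring this to ``a finite but delicate case analysis'' is not a proof plan: it is the whole content of the lemma, and a single application of $7$-location to the quadrilateral on $\{v',v,w,w'\}$ cannot produce $w'$, since $w'$ is needed before that quadrilateral even exists.

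The paper's proof goes around this in a structurally different way, and your per-step accounting is also too optimistic in comparison. Instead of descending through one new vertex per step, the paper fixes a tightened $(v',w)$-path in $D$ avoiding $v$, forms the cycle $\alpha = (v,v',w_1,\dots,w_n,w)$, and splits into cases: if $\alpha$ (or a suitable full subcycle $\delta$ or $\beta$, after analysing the possible diagonals $\langle v,w_i\rangle$, $\langle w,w_i\rangle$) is full of length at most $7$, then $7$-location of $D$ (Lemma \ref{3.6}) gives a cone vertex $z$ at distance $k-1$ from $u$, and the induction hypothesis is applied twice, through $z$; if the full cycle is longer than $7$, the proof uses that $D$ is a planar disc of minimal area (so, by Pick's formula, with the minimal number of interior vertices) to extract a fan of interior vertices $z_1,\dots,z_q$ adjacent to $v$ or $w$, and applies the induction hypothesis successively along the whole fan, summing $q+1$ triangles and $q$ (or $q+1$) inductively bounded cycles. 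In particular the number of $2$-simplices adjoined per step of the distance induction is not bounded by an absolute constant, contrary to the closing claim of your plan; the cases $d(u,w)=d(u,v)$ and $d(u,w)=d(u,v)-1$ are also treated separately, with the diagonal analysis of $\alpha$ repeated in each. To repair your argument you would either have to prove that minimal $7$-located disc diagrams satisfy the triangle/quadrangle condition (a nontrivial statement you have not addressed), or replace the single-vertex descent by the path-and-fan analysis that the paper actually carries out.
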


\begin{proof}

Because $D$ is minimal, due to Lemma \ref{2.4}, the simplicial map $f$ is nondegenerate. Also the previous lemma implies that $D$ is $7$-located.

Let $k = d(u,v)$ and let $l = d(u,w)$. Let $v'$ be a vertex of $P$ such that $v \sim v'$. Let $P'$ be a shortest $(u,v')$-path such that $P = P' \cup \langle v',v \rangle$.
There are three cases to be analyzed: $l = k+1$, $l = k$, $l = k-1$.

\subsection{Case $1$}

We consider the case when $d(u,w) > d(u,v)$. Then $l = k+1$. Let $Q = P \cup \langle v,w \rangle$ be a shortest $(u,w)$-path. Then $\rm{Area} C = 0 \leq \rm{const} \cdot d(u,v)$. This completes the proof in this case.

\subsection{Case $2$}

We consider the case when $d(u,w) = d(u,v)$. Hence $l = k$. We prove by induction on $k$ the existence of a shortest $(u,w)$-path $Q$ such that $\rm{Area} C \leq \rm{const} \cdot k$ where $C$ is the cycle formed by the paths $P,Q$ and the edge $\langle v,w \rangle$.

We consider the case when $w \sim v'$. Also if $k = 1$, then $w \sim v'$. In both cases, let $Q = P' \cup \langle v',w \rangle$. Then $\rm{Area} C = 1 \leq \rm{const} \cdot k$ what completes the proof in these cases.

From now on we assume that $w \nsim v'$. We consider a $(v',w)$-path $(v', w_{1}, ..., w_{n}, w),$ $n \geq 1$ that does not pass through $v$ but, except for that, it is tightened. Let $\alpha = (v, v', w_{1}, ..., w_{n}, w, v)$. %Assume w.l.o.g. $w_{1} \notin P'$.

%ARAGRAFUL URMATOR E NECESAR SI LA CAZ 3?? E NECESAR LA CAZ 2??

%Assume first $n=1$. Note that $d(v',u) = d(w_{1},u) = k-1$. By induction hypothesis applied to $P'$, there is a shortest $(u,w_{1})$-path $Q'$ such that $\rm{Area} C_{1} \leq \rm{const} \cdot (k-1) + 2 \leq \rm{const} \cdot k$ %where $C_{1}$ is the cycle formed by the paths $P', Q'$ and the edge $\langle v', w_{1} \rangle$. Let $Q = Q' \cup \langle w_{1},w \rangle$. Because $D$ is simplicial, either $v' \sim w$ or $v \sim w_{1}$. Assume w.l.o.g. $w %\sim v'$. In conclusion $\rm{Area} C = \rm{Area} C_{1} + \rm{Area} (\langle v,v',w_{1} \rangle) + \rm{Area} (\langle v,w,w \rangle) \leq \rm{const} \cdot k$. For the rest of case $2$, let $n \geq 2$.

\begin{enumerate}

\item Case $2.1.$ The cycle $\alpha$ is full. Depending on the value of $n$, there are two cases to be analyzed. We treat them below.

%FIGURA 1 PAG 5
\begin{enumerate}

\item If $n \leq 4$, then $4 \leq |\alpha| \leq 7$. Because $\alpha$ is full and its length is at most $7$, by $7$-location, there is a vertex $z$ of $D$ such that $\alpha \subset D_{z}$. Note that $d(v',u) = d(z,u) = d(w_{n},u) = k-1$. Because $v' \sim z$, by induction on $P'$, there is a shortest $(u,z)$-path $Z$ such that $\rm{Area} C_{1} \leq \rm{const} \cdot (k-1)$ where $C_{1}$ is the cycle formed by the paths $P', Z$ and the edge $\langle v',z \rangle$. Because $z \sim w_{n}$, by induction on $Z$, there is a shortest $(u,w_{n})$-path $Q'$ such that $\rm{Area} C_{2} \leq \rm{const} \cdot (k-1)$ where $C_{2}$ is the cycle formed by the paths $Z,Q'$ and the edge $\langle z,w_{n} \rangle$. Let $Q = Q' \cup \langle w_{n},w \rangle$ be a shortest $(u,w)$-path. In conclusion $\rm{Area} C = \rm{Area} C_{1} + \rm{Area} C_{2} + \rm{Area}(\langle v,v',z \rangle) + \rm{Area}(\langle v,w,z \rangle) + \rm{Area}(\langle w,z,w_{n} \rangle) \leq 2 \cdot \rm{const} \cdot (k-1) + 3 \leq \rm{const} \cdot k$. The last inequality holds because $\rm{const} > 2$.

%FIG 7 PAG 7

\item If $n > 4$, then $|\alpha| > 7$.
Because $\alpha$ is full, there are vertices inside $\alpha$. Because the area of $D$ is minimal, based on Pick's formula, the number of vertices inside $\alpha$ is also minimal. Let $z_{j}, 1 \leq j \leq r$ be the vertices inside $\alpha$ such that $v' \sim z_{1}, z_{j} \sim z_{j+1}, 1 \leq j \leq r-1$, $z_{r} \sim w_{n}$. Besides, for $1 \leq j \leq r$, either $v \sim z_{j}$ or $w \sim z_{j}$ or $v \sim z_{j} \sim w$. Because $D$ is flat, there is a unique vertex $z_{q}$, $1 \leq q \leq r$ such that $v \sim z_{q} \sim w$. Note that $d(v',u) = d(z_{j},u) = k-1$, $1 \leq j \leq q$.  Because $v' \sim z_{1}$, by induction on $P'$, there is a shortest $(u,z_{1})$-path $Z_{1}$ such that $\rm{Area} C_{0} \leq \rm{const} \cdot (k-1)$ where $C_{0}$ is the cycle formed by $P', Z_{1}$, $\langle v',z_{1} \rangle$. For $j \in \{ 1, ..., q-1 \}$, because $z_{j} \sim z_{j+1}$, by induction on $Z_{j}$, there is a shortest $(u,z_{j+1})$-path $Z_{j+1}$ such that $\rm{Area} C_{j} \leq \rm{const} \cdot (k-1)$ where $C_{j}$ is the cycle formed by $Z_{j}, Z_{j+1}, \langle z_{j},z_{j+1} \rangle$. Let $Q = Z_{q} \cup \langle z_{q},w \rangle$ be a shortest $(u,w)$-path. Note that there are $q+1$ triangles contained in the cycle $(v,v',z_{1}, ..., z_{q},w)$. In conclusion $\rm{Area} C = \sum_{j=0}^{q-1} \rm{Area} C_{j} + \rm{Area} (\langle v,v',z_{1} \rangle) + ... + \rm{Area} (\langle v,z_{q},w \rangle) \leq q \cdot \rm{const} \cdot (k-1) + q + 1 \leq \rm{const} \cdot k$. The last inequality holds due to the fact that $\rm{const} > 2$.

\end{enumerate}

\item Case $2.2.$ The cycle $\alpha$ is not full.
Because $w \nsim v'$ and because the path $(v',w_{1}, ..., w_{n},w)$ is tightened (except for the fact that it does not pass through $v$), the possible diagonals of $\alpha$ are $\langle v,w_{i} \rangle, 1 \leq i \leq n$, $\langle w,w_{i} \rangle, 1 \leq i \leq n-1$. Depending on this, there are several cases to be analyzed. We treat them below.

Case $2.2.1.$ Suppose $v \sim w_{1}$. Note that $d(v',u) = d(w_{1},u) = k-1$. Because $v' \sim w_{1}$, by induction on $P'$, there is a shortest $(u,w_{1})$-path $R'$ such that $\rm{Area} C_{1} \leq \rm{const} \cdot (k-1)$. We denoted by $C_{1}$ the cycle formed by the paths $P',R', \langle v',w_{1} \rangle$. Then $\rm{Area} C' = \rm{Area} C_{1} + \rm{Area} (\langle v,v',w_{1} \rangle) \leq \rm{const} \cdot (k-1) + 1 \leq \rm{const} \cdot k$. We denoted by $C'$ the cycle formed by $P,R', \langle v,w_{1} \rangle$.

Case $2.2.2.$ Let $w_{i}, 2 \leq i \leq n$ such that $v \sim w_{i}$ and $v \nsim w_{i-j}, 1 \leq j \leq i-1$.
Let $\delta = (v,v',w_{1}, ..., w_{i})$. Note that, due to the choice of $w_{i}$, $\delta$ is full.
Depending on the value of $i$, there are two cases to be analyzed. We present them below.

\begin{enumerate}

 \item If $i \leq 5$ then $4 \leq |\delta| \leq 7$. Then, by $7$-location, there is a vertex $z$ such that $\delta \subset D_{z}$. Note that $d(v',u) = d(z,u) = d(w_{i},u) = k-1$. Because $v' \sim z$, by induction on $P'$, there is a shortest $(u,z)$-path $Z$ such that $\rm{Area} C_{1} \leq \rm{const} \cdot (k-1)$ where $C_{1}$ is the cycle formed by the paths $P', Z$ and the edge $\langle v',z \rangle$. Because $z \sim w_{i}$, by induction on $Z$, there is a shortest $(u,w_{i})$-path $R'$ such that $\rm{Area} C_{2} \leq \rm{const} \cdot (k-1)$ where $C_{2}$ is the cycle formed by the paths $Z, R'$ and the edge $\langle z,w_{i} \rangle$. Then we have $\rm{Area} C' = \rm{Area} C_{1} + \rm{Area} C_{2} + \rm{Area} (\langle v,v',z \rangle) + \rm{Area} (\langle v,w_{i},z \rangle$) $\leq 2 \cdot \rm{const} \cdot (k-1) + 2 \leq \rm{const} \cdot k$. We denoted by $C'$ the cycle formed by the paths $P,R'$ and the edge $\langle v,w_{i} \rangle$.

\item If $i > 5$ then $|\delta| > 7$. Because $\delta$ is full, there are vertices inside $\delta$. Because the area of $D$ is minimal, based on Pick's formula, the number of vertices inside $\delta$ is also minimal. Let $z_{j}, 1 \leq j \leq r$ be the vertices inside $\delta$ such that $v' \sim z_{1}, z_{j} \sim z_{j+1}, 1 \leq j \leq r-1, z_{r} \sim w_{i}$. Besides, for $1 \leq j \leq r$, $v \sim z_{j}$. Note that $d(v',u) = d(z_{j},u) = d(w_{i},u)$, $1 \leq j \leq r$. By induction on $P'$, there is a shortest $(u,z_{1})$-path $Z_{1}$ such that $\rm{Area} C_{0} \leq \rm{const} \cdot (k-1)$ where $C_{0}$ is the cycle formed by $P', Z_{1}$ and the edge $\langle v',z_{1} \rangle$. For $j \in \{ 1, ..., r-1 \}$, because $z_{j} \sim z_{j+1}$, by induction on $Z_{j}$, there is a shortest $(u,z_{j+1})$-path $Z_{j+1}$ such that $\rm{Area} C_{j} \leq \rm{const} \cdot (k-1)$ where $C_{j}$ is the cycle formed by $Z_{j}, Z_{j+1}, \langle z_{j},z_{j+1} \rangle$. Because $z_{r} \sim w_{i}$, by induction on $Z_{r}$, there is a shortest $(u,w_{i})$-path $R'$ such that $\rm{Area} C_{r} \leq \rm{const} \cdot (k-1)$ where $C_{r}$ is the cycle formed by $Z_{r}, R', \langle z_{r},w_{i} \rangle$. Note that there are $r+1$ triangles contained in the cycle $(v,v',z_{1}, ..., z_{r}, w_{i})$. In conclusion $\rm{Area} C' = \sum_{j=0}^{r} \rm{Area} C_{j} + \rm{Area} (\langle v,v',z_{1} \rangle) + ... + \rm{Area} (\langle v,w_{i},z_{r} \rangle) \leq (r+1) \cdot \rm{const} \cdot (k-1) + (r+1) \leq \rm{const} \cdot k$. We denoted by $C'$ the cycle formed by the paths $P, R'$ and the edge $\langle v,w_{i} \rangle$.

\end{enumerate}

Case $2.2.3.$ Let $w_{i}, 1 \leq i \leq n$ such that $w \sim w_{i}$, $w \nsim w_{i-j}, 1 \leq j \leq i-1$. We consider the cycle $\delta = (v,v',w_{1}, ..., w_{i}, w, v)$.

Case $2.2.3.a$ Assume $\delta$ is full. Depending on the value of $i$, there are two cases to be analyzed. We discuss them below.

\begin{enumerate}

\item If $i \leq 4$ then $4 \leq |\delta| \leq 7$. Then, by $7$-location, there is a vertex $z$ such that $\delta \subset D_{z}$. Note that $d(v',u) = d(z,u) = d(w_{i},u) = k-1$. Because $v' \sim z$, by induction on $P'$, there is a shortest $(u,z)$-path $Z$ such that $\rm{Area} C_{1} \leq \rm{const} \cdot (k-1)$ where $C_{1}$ is the cycle formed by the paths $P', Z$ and the edge $\langle v',z \rangle$. Because $z \sim w_{i}$, by induction on $Z$, there is a shortest $(u,w_{i})$-path $Q'$ such that $\rm{Area} C_{2} \leq \rm{const} \cdot (k-1)$ where $C_{2}$ is the cycle formed by the paths $Z, Q'$ and the edge $\langle z,w_{i} \rangle$. Let $Q = Q' \cup \langle w_{i},w \rangle$ be a shortest $(u,w)$-path. Then we have $\rm{Area} C = \rm{Area} C_{1} + \rm{Area} C_{2} + \rm{Area} (\langle v,v',z \rangle) + \rm{Area} (\langle v,w,z \rangle) + \rm{Area} (\langle w,w_{i},z \rangle)$ $\leq 2 \cdot \rm{const} \cdot (k-1) + 3 \leq \rm{const} \cdot k$.

\item If $i > 4$ then $|\delta| > 7$.
Because $\delta$ is full, there are vertices inside $\delta$. Because the area of $D$ is minimal, Pick's formula implies that the number of vertices inside $\delta$ is also minimal. Let $z_{j}, 1 \leq j \leq r$ be the vertices inside $\delta$ such that $v' \sim z_{1}, z_{j} \sim z_{j+1}, 1 \leq j \leq r-1, z_{r} \sim w_{i}$. Besides, for $1 \leq j \leq r$, either $v \sim z_{j}$ or $w \sim z_{j}$ or $v \sim z_{j} \sim w$. Because $D$ is flat, there is a unique vertex $z_{q}, 1 \leq q \leq r$ such that $v \sim z_{q} \sim w$. Note that $d(v',u) = d(z_{j},u) = k-1$, $1 \leq j \leq q$. Because $v' \sim z_{1}$, by induction on $P'$, there is a shortest $(u,z_{1})$-path $Z_{1}$ such that $\rm{Area} C_{0} \leq \rm{const} \cdot (k-1)$ where $C_{0}$ is the cycle formed by $P', Z_{1}$ and the edge $\langle v',z_{1} \rangle$. For $j \in \{ 1, ..., q-1 \}$, because $z_{j} \sim z_{j+1}$, by induction on $Z_{j}$, there is a shortest $(u,z_{j+1})$-path $Z_{j+1}$ such that $\rm{Area} C_{j} \leq \rm{const} \cdot (k-1)$ where $C_{j}$ is the cycle formed by $Z_{j}, Z_{j+1}, \langle z_{j},z_{j+1} \rangle$. Let $Q = Z_{q} \cup \langle z_{q},w \rangle$ be a shortest $(u,w)$-path. Note that there are $q+1$ triangles contained in the cycle $(v,v',z_{1}, ..., z_{q},w)$. In conclusion $\rm{Area} C = \sum_{j=0}^{q-1} \rm{Area} C_{j} + \rm{Area} (\langle v,v',z_{1} \rangle) + ...+ \rm{Area} (\langle v,z_{q},w \rangle) \leq q \cdot \rm{const} \cdot (k-1) + q+1 \leq \rm{const} \cdot k$.

\end{enumerate}

Case $2.2.3.b$ Assume $\delta$ is not full. 

Suppose $v \sim w_{1}$. Then $d(v',u) = d(w_{1},u) = k-1$. Because $v' \sim w_{1}$, by induction on $P'$, there is a shortest $(u,w_{1})$-path $R'$ such that $\rm{Area} C_{1} \leq \rm{const} \cdot (k-1)$ where $C_{1}$ is the cycle formed by $P', R', \langle v',w_{1} \rangle$. Then $\rm{Area} C' = \rm{Area} C_{1} + \rm{Area} (\langle v,v',w_{1} \rangle) \leq \rm{const} \cdot (k-1) + 1 \leq \rm{const} \cdot k$. We denoted by $C'$ the cycle formed by $P, R', \langle v,w_{1} \rangle$.

Let $w_{s}, 2 \leq s \leq i$ such that $v \sim w_{s}, v \nsim w_{s-j}, 1 \leq j \leq s-1$. Because the cycle $(v,v',w_{1}, ..., w_{s})$ is full and of length at least $4$, Case $2.2.2.$ implies that there is a shortest $(u,w_{s})$-path $R'$ such that $\rm{Area} C' \leq \rm{const} \cdot k$. We denoted by $C'$ the cycle formed by $P, R', \langle v,w_{s} \rangle$.

If $s = i$, let $Q = R' \cup \langle w_{i},w \rangle$ be a shortest $(u,w)$-path. Then $\rm{Area} C = \rm{Area} C' + \rm{Area} (\langle v,w,w_{i} \rangle) \leq \rm{const} \cdot (k-1) + 1 \leq \rm{const} \cdot k$.

From now on assume that $s \neq i$.
Let $\beta = (v,w_{s}, ..., w_{i}, w, v)$. In case $2.2.3.b.1$ we treat the situation when $\beta$ is full. In case $2.2.3.b.2$ we discuss the case when $\beta$ is not full.

Case $2.2.3.b.1$
Depending on the value of $i-s$, there are two cases to be analyzed. We present them below.

\begin{enumerate}

\item If $i-s \leq 4$, then $4 \leq |\beta| \leq 7$. By $7$-location, there is a vertex $z$ such that $\beta \subset D_{z}$. Note that $d(w_{s},u) = d(z,u) = d(w_{i},u) = k-1$. Because $w_{s} \sim z$, by induction on $R'$, there is a shortest $(u,z)$-path $Z$ such that $\rm{Area} C_{1} \leq \rm{const} \cdot (k-1)$ where $C_{1}$ is the cycle formed by the paths $R', Z$ and the edge $\langle w_{s},z \rangle$. Because $z \sim w_{i}$, by induction on $Z$, there is a shortest $(u,w_{i})$-path $Q'$ such that $\rm{Area} C_{2} \leq \rm{const} \cdot (k-1)$ where $C_{2}$ is the cycle formed by the paths $Z, Q'$ and the edge $\langle z,w_{i} \rangle$. Let $Q = Q' \cup \langle w_{i},w \rangle$ be a shortest $(u,w)$-path. Then we have $\rm{Area} C'' = \rm{Area} C_{1} + \rm{Area} C_{2} + \rm{Area} (\langle w_{s},v,z \rangle) + \rm{Area} (\langle v,w,z \rangle) + \rm{Area} (\langle w,w_{i},z \rangle)$ $\leq 2 \cdot \rm{const} \cdot (k-1) + 3 \leq \rm{const} \cdot k$. We denoted by $C''$ the cycle formed by the paths $R',Q$ and the edges $\langle w_{s},v \rangle$, $\langle v,w \rangle$.

   In conclusion $\rm{Area} C  = \rm{Area} C' + \rm{Area} C'' \leq  \rm{const} \cdot k$.

\item If $i-s > 4$, then $|\beta| > 7$.
Because $\beta$ is full, there are vertices inside $\beta$. Because the area of $D$ is minimal, based on Pick's formula, the number of vertices inside $\beta$ is also minimal. Let $z_{j}, 1 \leq j \leq r$ be the vertices inside $\beta$ such that $w_{s} \sim z_{1}, z_{j} \sim z_{j+1}, 1 \leq j \leq r-1, z_{r} \sim w_{i}$. Besides, for $1 \leq j \leq r$, either $v \sim z_{j}$ or $w \sim z_{j}$ or $v \sim z_{j} \sim w$. Because $D$ is flat, there is a unique vertex $z_{q}, 1 \leq q \leq r$ such that $v \sim z_{q} \sim w$. Note that $d(w_{s},u) = d(z_{j},u) = k-1$, $1 \leq j \leq q$. By induction on $R'$, because $w_{s} \sim z_{1}$, there is a shortest $(z_{1},u)$-path $Z_{1}$ such that $\rm{Area} C_{0} \leq \rm{const} \cdot (k-1)$ where $C_{0}$ is the cycle formed by $R', Z_{1}$ and the edge $\langle w_{s},z_{1} \rangle$. For $j \in \{ 1, ..., q-1 \}$, by induction on $Z_{j}$, because $z_{j} \sim z_{j+1}$, there is a shortest $(u,z_{j+1})$-path $Z_{j+1}$ such that $\rm{Area} C_{j} \leq \rm{const} \cdot (k-1)$ where $C_{j}$ is the cycle formed by $Z_{j}, Z_{j+1}, \langle z_{j},z_{j+1} \rangle$. Let $Q = Z_{q} \cup \langle z_{q},w \rangle$ be a shortest $(u,w)$-path. Note that there are $q+1$ triangles contained in the cycle $(v,w_{s},z_{1}, ..., z_{q},w)$. In conclusion $\rm{Area} C'' = \sum_{j=0}^{q-1} \rm{Area} C_{j} + \rm{Area} (\langle v,w_{s},z_{1} \rangle) + ... + \rm{Area} (\langle v,z_{q},w \rangle) \leq q \cdot \rm{const} \cdot (k-1) + q+1  \leq \rm{const} \cdot k$. We denoted by $C''$ the cycle formed by the paths $R',Q$ and the edges $\langle w_{s},v \rangle$, $\langle v,w \rangle$.

   In conclusion $\rm{Area} C  = \rm{Area} C' + \rm{Area} C'' \leq  \rm{const} \cdot k$.

Case $2.2.3.b.2$
If $\beta$ is not full, we split $\beta$ into full cycles $\beta_{1}, ..., , \beta_{j}, j \geq 2$  containing each at least one edge of $\beta$ and at least one of its diagonals. We argue for each of these full cycles the same way we argued in one of the cases discussed above.

\end{enumerate}

\end{enumerate}

\subsection{Case $3$}

We consider the case when $d(u,w) < d(u,v)$. Hence $l = k-1$. We prove by induction on $k$ the existence of a shortest $(u,w)$-path $Q$ such that $\rm{Area} C \leq \rm{const} \cdot k$ where $C$ is the cycle formed by the paths $P,Q$ and the edge $\langle v,w \rangle$.

If $k = 1$ or more generally if $w = v'$, then let $Q = P'$ be a shortest $(u,w)$-path. In this case we get $\rm{Area} C = 0 \leq \rm{const} \cdot k$.

If $v' \sim w$, note that $d(v',u) = d(w,u) = k-1$. By induction on $P'$, there is a shortest $(u,w)$-path $Q$ such that $\rm{Area} C' \leq  \rm{const} \cdot (k-1)$ where $C'$ is the cycle formed by the paths $P',Q$ and the edge $\langle v',w \rangle$. Then we have $\rm{Area} C = \rm{Area} C' + \rm{Area} (\langle v,v',w \rangle) \leq \rm{const} \cdot (k-1) + 1 \leq \rm{const} \cdot k$.

%FIG 8 PAG 7

From now on assume that $w \neq v'$ and that $w \nsim v'$.

We consider a $(v',w)$-path $(v', w_{1}, ..., w_{n}, w)$ that does not pass through $v$ but, except for that, it is tightened. Let $\alpha = (v,v', w_{1}, ..., w_{n}, w,v), n \geq 1$.

\begin{enumerate}

\item Case $3.1.$ The cycle $\alpha$ is full. Depending on the value of $n$, there are two cases to be analyzed.

%FIG 9 PAG 7
\begin{enumerate}

\item If $n \leq 4$, then $4 \leq |\alpha| \leq 7$.
By $7$-location, there is a vertex $z$ such that $\alpha \subset D_{z}$.
Note that $d(v',u) = d(z,u) = d(w,u) = k-1$.
Because $v' \sim z$, by induction on $P'$, there is a shortest $(u,z)$-path $Z$ such that $\rm{Area} C_{1} \leq \rm{const} \cdot (k-1)$ where $C_{1}$ is the cycle formed by $P',Z$, $\langle v',z \rangle$.
Because $z \sim w$, by induction on $Z$, there is a shortest $(u,w)$-path $Q$ such that $\rm{Area} C_{2} \leq \rm{const} \cdot (k-1)$ where $C_{2}$ is the cycle formed by $Z,Q$, $\langle z,w \rangle$.
In conclusion $\rm{Area} C = \rm{Area} C_{1} + \rm{Area} C_{2} + \rm{Area} (\langle v',v,z \rangle) + \rm{Area} (\langle w,v,z \rangle) \leq 2 \cdot \rm{const} \cdot (k-1) + 2 \leq \rm{const} \cdot k$.

%FIG 10 PAG 7

\item If $n > 4$, then $|\alpha| > 7$.
Because $\alpha$ is full, there are vertices inside $\alpha$. Because the area of $D$ is minimal, due to Pick's formula, the number of vertices inside $\alpha$ is also minimal. Let $z_{j}, 1 \leq j \leq r$ be the vertices inside $\alpha$ such that $v' \sim z_{1}$, $z_{j} \sim z_{j+1}, 1 \leq j \leq r-1$, $z_{r} \sim w_{n}$. Besides, for $1 \leq j \leq r$, either $v \sim z_{j}$ or $w \sim z_{j}$ or $v \sim z_{j} \sim w$. Because $D$ is flat, there is a unique vertex $z_{q}, 1 \leq q \leq r$ such that $v \sim z_{q} \sim w$. Note that $d(v',u) = d(z_{j},u) = d(w,u) = k-1$, $1 \leq j \leq q$. Because $v' \sim z_{1}$, by induction on $P'$, there is a shortest $(u,z_{1})$-path $Z_{1}$ such that $\rm{Area} C_{0} \leq \rm{const} \cdot (k-1)$ where $C_{0}$ is the cycle formed by $P', Z_{1}$ and the edge $\langle v',z_{1} \rangle$. For $j \in \{ 1, ..., q-1 \}$, because $z_{j} \sim z_{j+1}$, by induction on $Z_{j}$, there is a shortest $(u,z_{j+1})$-path $Z_{j+1}$ such that $\rm{Area} C_{j} \leq \rm{const} \cdot (k-1)$ where $C_{j}$ is the cycle formed by $Z_{j}, Z_{j+1}, \langle z_{j},z_{j+1} \rangle$. Because $z_{q} \sim w$, by induction on $Z_{q}$, there is a shortest $(u,w)$-path $Q$ such that $\rm{Area} C_{q} \leq \rm{const} \cdot (k-1)$ where $C_{q}$ is the cycle formed by $Z_{q}, Q, \langle z_{q},w \rangle$. Note that there are $q+1$ triangles contained in the cycle $(v,v',z_{1}, ..., z_{q},w)$. In conclusion $\rm{Area} C = \sum_{j=0}^{q} \rm{Area} C_{j} + \rm{Area} (\langle v,v',z_{1} \rangle) + ... + \rm{Area} (\langle v,z_{q},w \rangle) \leq q \cdot \rm{const} \cdot (k-1) + q+1 \leq \rm{const} \cdot k$.

\end{enumerate}

\item Case $3.2.$ The cycle $\alpha$ is not full.
Because $w \nsim v'$ and because the path $(v',w_{1}, ..., w_{n},w)$ is tightened (except for the fact that it does not pass through $v$), the possible diagonals of $\alpha$ are $\langle v,w_{i} \rangle, 1 \leq i \leq n$, $\langle w,w_{i} \rangle, 1 \leq i \leq n-1$. Depending on this, there are several cases to be analyzed. We treat them below.

Case $3.2.1.$ Suppose $v \sim w_{1}$. Case $2.2.1.$ implies that there is a shortest $(u,w_{1})$-path $R'$ such that $\rm{Area} C' \leq \rm{const} \cdot k$. We denoted by $C'$ the cycle formed by $P,R', \langle v,w_{1} \rangle$.

%Note that $d(v',u) = d(w_{1},u) = k-1$. Because $v' \sim w_{1}$, by induction on $P'$, there is a shortest $(u,w_{1})$-path $R'$ such that $\rm{Area} C_{1} \leq \rm{const} \cdot (k-1)$ where $C_{1}$ is the cycle formed by the %paths $P',R', \langle v',w_{1} \rangle$. Then $\rm{Area} C' = \rm{Area} C_{1} + \rm{Area} (\langle v,v',w_{1} \rangle) \leq \rm{const} \cdot (k-1) + 1 \leq \rm{const} \cdot k$. The last inequality holds due to the fact that %$\rm{const} > 2$. We denoted by $C'$ the cycle formed by $P,R', \langle v,w_{1} \rangle$.

Case $3.2.2.$ Let $w_{i}, 2 \leq i \leq n$ such that $v \sim w_{i}$ and $v \nsim w_{i-j}, 1 \leq j \leq i-1$.
Let $\delta = (v,v',w_{1}, ..., w_{i})$. Due to the choice of $w_{i}$, $\delta$ is full. Case $2.2.2.$ implies that there is a shortest $(u,w_{i})$-path $R'$ such that $\rm{Area} C' \leq \rm{const} \cdot k$. We denoted by $C'$ the cycle formed by $P, R'$, $\langle v,w_{i} \rangle$.

Case $3.2.3.$ Let $w_{i}, 1 \leq i \leq n$ such that $w \sim w_{i}$, $w \nsim w_{i-j}, 1 \leq j \leq i-1$. We consider the cycle $\delta = (v,v',w_{1}, ..., w_{i}, w, v)$.

Case $3.2.3.a$ Assume $\delta$ is full.
Depending on the value of $i$, there are two cases to be analyzed. We present them below.

\begin{enumerate}

\item If $i \leq 4$ then $4 \leq |\delta| \leq 7$. By $7$-location, there is a vertex $z$ such that $\delta \subset D_{z}$. Note that $d(v',u) = d(z,u) = d(w,u) = k-1$. Because $v' \sim z$, by induction on $P'$, there is a shortest $(u,z)$-path $Z$ such that $\rm{Area} C_{1} \leq \rm{const} \cdot (k-1)$ where $C_{1}$ is the cycle formed by the paths $P', Z$ and the edge $\langle v',z \rangle$. Because $z \sim w$, by induction on $Z$, there is a shortest $(u,w)$-path $Q$ such that $\rm{Area} C_{2} \leq \rm{const} \cdot (k-1)$ where $C_{2}$ is the cycle formed by the paths $Z, Q$ and the edge $\langle z,w \rangle$. Then we have $\rm{Area} C = \rm{Area} C_{1} + \rm{Area} C_{2} + \rm{Area} (\langle v,v',z \rangle) + \rm{Area} (\langle v,w,z \rangle) $ $\leq 2 \cdot \rm{const} \cdot (k-1) + 2 \leq \rm{const} \cdot k$.

\item If $i > 4$ then $|\delta| > 7$.
Because $\delta$ is full, there are vertices inside $\delta$. Because the area of $D$ is minimal, due to Pick's formula, the number of vertices inside $\delta$ is also minimal. Let $z_{j}, 1 \leq j \leq r$ be the vertices inside $\delta$ such that $v' \sim z_{1}, z_{j} \sim z_{j+1}, 1 \leq j \leq r-1, z_{r} \sim w$. Besides, for $1 \leq j \leq r$, either $v \sim z_{j}$ or $v \sim z_{j} \sim w$ or $w \sim z_{j}$. Because $D$ is flat, there is a unique vertex $z_{q}$ such that $v \sim z_{q} \sim w$, $1 \leq q \leq r$. Note that $d(v',u) = d(z_{j},u) = d(w,u) = k-1$, $1 \leq j \leq r$. By induction on $P'$, because $v' \sim z_{1}$, there is a shortest $(u,z_{1})$-path $Z_{1}$ such that $\rm{Area} C_{0} \leq \rm{const} \cdot (k-1)$ where $C_{0}$ is the cycle formed by $P', Z_{1}$, $\langle v',z_{1} \rangle$. For $j \in \{ 1, ..., q-1 \}$, by induction on $Z_{j}$, because $z_{j} \sim z_{j+1}$, there is a shortest $(u,z_{j+1})$-path $Z_{j+1}$ such that $\rm{Area} C_{j} \leq \rm{const} \cdot (k-1)$ where $C_{j}$ is the cycle formed by $Z_{j}, Z_{j+1}, \langle z_{j},z_{j+1} \rangle$. Because $z_{q} \sim w$, by induction on $Z_{q}$, there is a shortest $(u,w)$-path $Q$ such that $\rm{Area} C_{q} \leq \rm{const} \cdot (k-1)$ where $C_{q}$ is the cycle formed by $Z_{q}, Q, \langle z_{q},w \rangle$. Note that there are $q+1$ triangles contained in the cycle $(v, v', z_{1}, ..., z_{q},w)$. In conclusion $\rm{Area} C = \sum_{j=0}^{q} \rm{Area} C_{j} + \rm{Area} (\langle v,v',z_{1} \rangle) + ... + \rm{Area} (\langle v,w,z_{q} \rangle) \leq (q+1) \cdot \rm{const} \cdot (k-1) + q+1 \leq \rm{const} \cdot k$.

\end{enumerate}

Case $3.2.3.b$ Assume $\delta$ is not full. 

Suppose $v \sim w_{1}$. Then $d(v',u) = d(w_{1},u) = k-1$. Because $v' \sim w_{1}$, by induction on $P'$, there is a shortest $(u,w_{1})$-path $R'$ such that $\rm{Area} C_{1} \leq \rm{const} \cdot (k-1)$ where $C_{1}$ is the cycle formed by $P', R', \langle v',w_{1} \rangle$. Then $\rm{Area} C' = \rm{Area} C_{1} + \rm{Area} (\langle v,v',w_{1} \rangle) \leq \rm{const} \cdot (k-1) + 1 \leq \rm{const} \cdot k$. We denoted by $C'$ the cycle formed by $P, R', \langle v,w_{1} \rangle$.

Let $w_{s}, 2 \leq s \leq i$ such that $v \sim w_{s}, v \nsim w_{s-j}, 1 \leq j \leq s-1$. Because $(v,v',w_{1}, ..., w_{s})$ is full and of length at least $4$, Case $2.2.2.$ implies that there is a shortest $(u,w_{s})$-path $R'$ such that $\rm{Area} C' \leq \rm{const} \cdot (k-1)$. We denoted by $C'$ the cycle formed by $P,R', \langle v,w_{s} \rangle$.

If $s = i$, let $Q = R' \cup \langle w_{i},w \rangle$ be a shortest $(u,w)$-path. Then $\rm{Area} C = \rm{Area} C' + \rm{Area} (\langle v,w,w_{i} \rangle) \leq \rm{const} \cdot (k-1) + 1 \leq \rm{const} \cdot k$.

From now on assume that $s \neq i$.
Let $\beta = (v,w_{s}, ..., w_{i}, w, v)$. In case $3.2.3.b.1$ we treat the situation when $\beta$ is full. In case $3.2.3.b.2$ we consider the case when $\beta$ is not full.

Case $3.2.3.b.1$
Depending on the value of $i-s$, there are two cases to be analyzed. We present them below.

\begin{enumerate}

\item If $i-s \leq 4$ then $4 \leq |\beta| \leq 7$. By $7$-location, there is a vertex $z$ such that $\beta \subset D_{z}$. Note that $d(w_{s},u) = d(z,u) = d(w,u) = k-1$. Because $w_{s} \sim z$, by induction on $R'$, there is a shortest $(u,z)$-path $Z$ such that $\rm{Area} C_{1} \leq \rm{const} \cdot (k-1)$ where $C_{1}$ is the cycle formed by $R', Z$, $\langle w_{s},z \rangle$. Because $z \sim w$, by induction on $Z$, there is a shortest $(u,w)$-path $Q$ such that $\rm{Area} C_{2} \leq \rm{const} \cdot (k-1)$ where $C_{2}$ is the cycle formed by the paths $Z, Q$ and the edge $\langle z,w \rangle$. Then we have $\rm{Area} C'' = \rm{Area} C_{1} + \rm{Area} C_{2} + \rm{Area} (\langle w_{s},v,z \rangle) + \rm{Area} (\langle v,w,z \rangle) $ $\leq 2 \cdot \rm{const} \cdot (k-1) + 2 \leq \rm{const} \cdot k$. We denoted by $C''$ the cycle formed by the paths $R',Q$ and the edges $\langle w_{s},v \rangle$, $\langle v,w \rangle$.

   In conclusion $\rm{Area} C  = \rm{Area} C' + \rm{Area} C'' \leq  \rm{const} \cdot k$.

\item If $i-s > 4$ then $|\beta| > 7$.
Because $\beta$ is full, there are vertices inside $\beta$. Because the area of $D$ is minimal, based on Pick's formula, the number of vertices inside $\beta$ is also minimal. Let $z_{j}, 1 \leq j \leq r$ be the vertices inside $\beta$ such that $w_{s} \sim z_{1}, z_{j} \sim z_{j+1}, 1 \leq j \leq r, z_{r} \sim w$. Besides, $v \sim z_{j}$, $1 \leq j \leq r-1$, $v \sim z_{r} \sim w$. Note that $d(w_{s},u) = d(z_{j},u) = d(w,u) = k-1$, $1 \leq j \leq r$. Because $w_{s} \sim z_{1}$, by induction on $R'$, there is a shortest $(u,z_{1})$-path $Z_{1}$ such that $\rm{Area} C_{0} \leq \rm{const} \cdot (k-1)$ where $C_{0}$ is the cycle formed by $R', Z_{1}$, $\langle w_{s},z_{1} \rangle$. For $j \in \{ 1, ..., r-1\}$, because $z_{j} \sim z_{j+1}$, by induction on $Z_{j}$, there is a shortest $(u,z_{j+1})$-path $Z_{j+1}$ such that $\rm{Area} C_{j} \leq \rm{const} \cdot (k-1)$ where $C_{j}$ is the cycle formed by $Z_{j}, Z_{j+1}, \langle z_{j},z_{j+1} \rangle$. Because $z_{r} \sim w$, by induction on $Z_{r}$, there is a shortest $(u,w)$-path $Q$ such that $\rm{Area} C_{r} \leq \rm{const} \cdot (k-1)$ where $C_{r}$ is the cycle formed by $Z_{r}, Q, \langle z_{r},w \rangle$. Note that there are $r+1$ triangles contained in the cycle $(v, w_{s}, z_{1}, ..., z_{r}, w)$. In conclusion $\rm{Area} C'' = \sum_{j=0}^{r} \rm{Area} C_{j} + \rm{Area} (\langle v,w_{s},z_{1} \rangle) + ... + \rm{Area} (\langle v,z_{r},w \rangle) \leq (r+1) \cdot \rm{const} \cdot (k-1) + r+1 \leq \rm{const} \cdot k$. We denoted by $C''$ the cycle formed by the paths $R',Q$ and $\langle w_{s},v \rangle$, $\langle v,w \rangle$.

   In conclusion $\rm{Area} C  = \rm{Area} C' + \rm{Area} C'' \leq  \rm{const} \cdot k$.

Case $3.2.3.b.2$
If $\beta$ is not full, we split $\beta$ into full cycles $\beta_{1}, ..., , \beta_{j}, j \geq 2$ containing each at least one edge of $\beta$ and at least one of its diagonals. We argue for each of these full cycles the same way we argued in one of the cases discussed above.

\end{enumerate}

\end{enumerate}

\end{proof}

\begin{theorem}[quadratic isoperimetric inequality]
Let $X$ be a simply connected, $7$-located simplicial complex. Let $\gamma$ be a loop in $X$. Let $(D,f)$ be a minimal filling diagram for $\gamma$. Let $D$ be triangulated such that it contains a cycle $\alpha = (v_{0}, v_{1}, ..., v_{n-1}, v_{0})$ of length $n$. Let $D_{0}$ be the subdisc of $D$ bounded by $\alpha$. Then $\rm{Area} (D_{0}) < \rm{const} \cdot n^{2}$ where $\rm{const}$ is any natural number such that $\rm{const} > 2$.
\end{theorem}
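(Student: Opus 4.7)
The plan is to apply Lemma \ref{3.8} repeatedly to build a ``fan'' of geodesics in $D_0$ from a fixed boundary vertex to every other vertex of $\alpha$, and then to sum the resulting area bounds.

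First I would observe that $D_0$ is itself a minimal filling diagram for $\alpha$: if it were not, one could replace $D_0$ inside $D$ by a strictly smaller filling of $\alpha$, thereby reducing $\rm{Area}(D)$ and contradicting the minimality of $(D,f)$. Hence Lemma \ref{3.8} can be applied with $\gamma$ replaced by $\alpha$ and $(D,f)$ replaced by $(D_0, f|_{D_0})$; in particular $D_0$ is itself $7$-located by Lemma \ref{3.6}.

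Setting $u := v_0$ and $P_0$ the trivial path at $v_0$, I would construct inductively, for $i = 1, \ldots, n-1$, a shortest $(u, v_i)$-path $P_i$ in $D_0$ together with a cycle $C_{i-1}$ formed by $P_{i-1}$, $P_i$ and the boundary edge $\langle v_{i-1}, v_i \rangle$ of $\alpha$. At stage $i$ I would invoke Lemma \ref{3.8} with the triple $(u,v,w) = (v_0, v_{i-1}, v_i)$ and the shortest path $P_{i-1}$, obtaining $P_i$ with
\[
\rm{Area}(C_{i-1}) \leq \rm{const} \cdot d(v_0, v_{i-1}).
\]
A final application with $(u,v,w) = (v_0, v_{n-1}, v_0)$ closes the fan, producing $C_{n-1}$ with the analogous bound. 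Because $v_i$ lies on the length-$n$ cycle $\alpha$, we have $d(v_0, v_i) \leq \min(i, n-i)$.

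The geodesics $P_0, P_1, \ldots, P_{n-1}$ form a fan rooted at $v_0$ which decomposes $D_0$ into subregions, each contained in the subdisc bounded by the corresponding cycle $C_i$. Using subadditivity of area I would then conclude
\[
\rm{Area}(D_0) \leq \sum_{i=0}^{n-1} \rm{Area}(C_i) \leq \rm{const} \cdot \sum_{i=0}^{n-1} \min(i, n-i) \leq \rm{const} \cdot \frac{n^2}{4} < \rm{const} \cdot n^2,
\]
which is the desired quadratic isoperimetric inequality.

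The delicate point will be justifying that the geodesic fan genuinely covers $D_0$, so that $\rm{Area}(D_0)$ is bounded by $\sum_i \rm{Area}(C_i)$. Consecutive geodesics $P_{i-1}, P_i$ may share initial segments or cross each other, in which case the cycle $C_i$ degenerates; however, such degeneracies only shrink the region of $D_0$ lying strictly between $P_{i-1}$ and $P_i$ relative to a minimal filling of $C_i$, so the subadditivity inequality used above remains valid.
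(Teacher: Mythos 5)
Your proposal is correct and takes essentially the same route as the paper: a fan of shortest paths from $v_0$ to each $v_i$ obtained by iterating Lemma \ref{3.8} on consecutive boundary vertices, with each cycle $C_i$ bounded by $\mathrm{const}\cdot d(v_0,v_i)\leq \mathrm{const}\cdot \frac{n}{2}$ and the $n$ such bounds summed. The only minor differences are that the paper applies Lemma \ref{3.8} directly to $(D,f)$ (so your minimality-of-$D_0$ observation is not needed) and uses the cruder estimate $d(v_0,v_i)\leq \frac{n}{2}$ instead of $\min(i,n-i)$; the covering/subadditivity step you flag as delicate is treated just as informally in the paper's own proof.
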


\begin{proof}

 For each $i \in \{0, ..., n-1\}$, we define a shortest $(v_{0},v_{i})$-path $P_{i}$ such that $\rm{Area} C_{i} \leq \rm{const} \cdot d(v_{0}, v_{i})$ where $C_{i}$ is the cycle formed by the paths $P_{i}$, $P_{i+1}$ and the edge $\langle v_{i}, v_{i+1} \rangle$. Let $P_{0}$ be the one vertex path $v_{0}$. Assume that we have already constructed $P_{i}$ such that it intersects $P_{i-1}$ only in $v_{0}$. According to the previous lemma, there exists a path $P_{i+1}$ in $D_{0}$ from $v_{0}$ to $v_{i+1}$ that intersects $P_{i}$ only in $v_{0}$ such that $\rm{Area} C_{i} \leq \rm{const} \cdot d(v_{0},v_{i}) \leq \rm{const} \cdot \frac{n}{2} < 2 \cdot \rm{const} \cdot \frac{n}{2} = \rm{const} \cdot n, 0 \leq i \leq n-1$. We denote by $C_{i}$ the concatenation of the paths $P_{i}, P_{i+1}$ and the edge $\langle v_{i},v_{i+1} \rangle$. Hence one can fill $D_{0}$ using the collection of $n$ cycles $C_{0}, C_{1}, ..., C_{n-1}$ each satisfying the inequality $\rm{Area} C_{i} < \rm{const} \cdot n, 0 \leq i \leq n-1$. In conclusion we have $\rm{Area} D_{0} \leq n \cdot \rm{Area} C_{i} < \rm{const} \cdot n^{2}$, $0 \leq i \leq n-1$.
\end{proof}

\begin{bibdiv}
\begin{biblist}

\bib{BH}{article}{
   author={Bridson, M.},
   author={Haefliger, A.},
   title={Metric spaces of nonpositive curvature},
   journal={Grundlehren der mathematischen Wissenschaften $319$},
   volume={Springer},
   date={1999},
%   doi={10.1016/j.aim.2013.04.009},
}

\bib{BCCGO}{article}{
   author={Bre{\v{s}}ar, B.},
   author={Chalopin, J.},
   author={Chepoi, V.},
   author={Gologranc, T.},
   author={Osajda, D.},
   title={Bucolic complexes},
   journal={Adv. Math.},
   volume={243},
   date={2013},
   pages={127--167},
   issn={0001-8708},
%   review={\MR{3062742}},
%   doi={10.1016/j.aim.2013.04.009},
}

\bib{ChaCHO}{article}{
   author={Chalopin, J.},
   author={Chepoi, V.},
   author={Hirai,H.},
   author={Osajda, D.},
   title={Weakly modular graphs and nonpositive curvature},
   status    ={to appear in Mem. Amer. Math. Soc.},
   eprint    ={arXiv:1409.3892},
   date      ={2014}
}

\bib{Ch}{article}{
   author={Chepoi, V.},
   title={Graphs of some CAT(0) complexes},
   journal={Adv. in Appl. Math.},
   volume={24},
   date={2000},
   number={2},
   pages={125--179},

%   review={\MR{3280043}},
%   doi={10.1090/S0002-9947-2014-06137-0},
}

\bib{ChOs}{article}{
   author={Chepoi, V.},
   author={Osajda, D.},
   title={Dismantlability of weakly systolic complexes and applications},
   journal={Trans. Amer. Math. Soc.},
   volume={367},
   date={2015},
   number={2},
   pages={1247--1272},
   issn={0002-9947},
%   review={\MR{3280043}},
%   doi={10.1090/S0002-9947-2014-06137-0},
}

\bib{E1}{article}{
   author={Elsener, T.},
   author={},
   title={Flats and flat torus theorem in systolic spaces},
   journal={Geometry and Topology},
   volume={13},
   date={2009},
   number={},
   pages={661--698},
   issn={},
%   review={\MR{3280043}},
%   doi={10.1090/S0002-9947-2014-06137-0},
}

\bib{Gom}{article}{
   author={G{\'o}mez-Ortells, R.},
   title={Compactly supported cohomology of systolic 3-pseudomanifolds},
   journal={Colloq. Math.},
   volume={135},
   date={2014},
   number={1},
   pages={103--112},
   issn={0010-1354},
%   review={\MR{3215372}},
%   doi={10.4064/cm135-1-8},
}

\bib{Gro}{article}{
   author={Gromov, M.},
   title={Hyperbolic groups},
   conference={
      title={Essays in group theory},
   },
   book={
      series={Math. Sci. Res. Inst. Publ.},
      volume={8},
      publisher={Springer, New York},
   },
   date={1987},
   pages={75--263},
%   review={\MR{919829 (89e:20070)}},
%   doi={10.1007/978-1-4613-9586-7\_3},
}

\bib{Hag}{article}{
    title     ={Complexes simpliciaux hyperboliques de grande dimension},
    author    ={Haglund, F.},
    status    ={preprint},
    journal   ={Prepublication Orsay},
    volume    ={71},
    date      ={2003},
    eprint    ={http://www.math.u-psud.fr/~haglund/cpl_hyp_gde_dim.pdf}
}

\bib{JS0}{article}{
   author={Januszkiewicz, T.},
   author={{\'S}wi{\c{a}}tkowski, J.},
   title={Hyperbolic Coxeter groups of large dimension},
   journal={Comment. Math. Helv.},
   volume={78},
   date={2003},
   number={3},
   pages={555--583},
   issn={0010-2571},
%   review={\MR{1998394 (2004h:20058)}},
%   doi={10.1007/s00014-003-0763-z},
}

\bib{JS1}{article}{
   author={Januszkiewicz, T.},
   author={{\'S}wi{\c{a}}tkowski, J.},
   title={Simplicial nonpositive curvature},
   journal={Publ. Math. Inst. Hautes \'Etudes Sci.},
   number={104},
   date={2006},
   pages={1--85},
   issn={0073-8301},
%   review={\MR{2264834 (2007j:53044)}},
%   doi={10.1007/s10240-006-0038-5},
}

\bib{JS2}{article}{
   author={Januszkiewicz, T.},
   author={{\'S}wi{\c{a}}tkowski, J.},
   title={Filling invariants of systolic complexes and groups},
   journal={Geom. Topol.},
   volume={11},
   date={2007},
   pages={727--758},
   issn={1465-3060},
%   review={\MR{2302501 (2008d:20079)}},
%   doi={10.2140/gt.2007.11.727},
}

\bib{L-8loc}{article}{
    title     ={A combinatorial negative curvature condition implying Gromov hyperbolicity},
    author    ={Laz\u{a}r, I.-C.},
    status    ={},
   journal={arXiv:1501.05487v3},
   date={2015}
}

\bib{L-8loc2}{article}{
    title     ={Minimal filling diagrams lemma for $5/9$-complexes},
    author    ={Laz\u{a}r, I.-C.},
    status    ={to appear in  Michigan Math. J.},
   journal={arxiv:1509.03760},
   date={2015}
}

\bib{O-ci}{article}{
   author={Osajda, D.},
   title={Connectedness at infinity of systolic complexes and groups},
   journal={Groups Geom. Dyn.},
   volume={1},
   date={2007},
   number={2},
   pages={183--203},
   issn={1661-7207},
%   review={\MR{2319456 (2008e:20064)}},
%   doi={10.4171/GGD/9},
}

\bib{O-ib}{article}{
   author={Osajda, D.},
   title={Ideal boundary of 7-systolic complexes and groups},
   journal={Algebr. Geom. Topol.},
   volume={8},
   date={2008},
   number={1},
   pages={81--99},
   issn={1472-2747},
%   review={\MR{2377278 (2009b:20075)}},
%   doi={10.2140/agt.2008.8.81},
}

\bib{O-chcg}{article}{
   author={Osajda, D.},
   title={A construction of hyperbolic Coxeter groups},
   journal={Comment. Math. Helv.},
   volume={88},
   date={2013},
   number={2},
   pages={353--367},
   issn={0010-2571},
%   review={\MR{3048190}},
%   doi={10.4171/CMH/288},
}

\bib{O-sdn}{article}{
    title     ={A combinatorial non-positive
                curvature I: weak systolicity},
    author    ={Osajda, D.},
    status    ={preprint},
    eprint    ={arXiv:1305.4661},
    date      ={2013}
}

\bib{O-8loc}{article}{
author    ={Osajda, D.},
title={Combinatorial negative curvature and triangulations of three-manifolds},
journal={Indiana Univ. Math. J.},
volume={64},
date={2015},
number={3},
pages={943--956},
}

\bib{O-ns}{article}{
    title     ={Normal subgroups of SimpHAtic groups},
    author    ={Osajda, D.},
    status    ={submitted},
    eprint    ={arXiv:1501.00951},
    date      ={2015}
}

\bib{OS}{article}{
author={Osajda, D.},
author={{\'S}wi{\c{a}}tkowski, J.},
title={On asymptotically hereditarily aspherical groups},
journal={Proc. London Math. Soc.},
date={2015},
doi = {10.1112/plms/pdv021},
}

\bib{Pr}{article}{
    title={Infinite systolic groups are not torsion},
    author={Prytula, T.},
    status={},
    journal={Colloquium Mathematicum},
    volume={153},
    date={2018},
    number={2},
    pages={169--194},
}

\end{biblist}
\end{bibdiv}

\end{document}